\renewcommand{\d}{\partial}
\newcommand{\dbar}{\overline{\partial}}
\newcommand{\ddbar}{\sqrt{-1}\d\overline{\d}}
\newtheorem{thm}{Theorem}
\newtheorem{prop}[thm]{Proposition}
\newtheorem{lem}[thm]{Lemma}
\newtheorem{cor}[thm]{Corollary}
\newtheorem{rem}[thm]{Remark}
\theoremstyle{definition}
\newtheorem{case}{Case}
\renewcommand{\[}{\begin{equation}}
	\renewcommand{\]}{\end{equation}}
\newcommand{\al}{\alpha}
\newcommand{\be}{\beta}
\newcommand{\la}{\lambda}
\newcommand{\La}{\Lambda}
\newcommand{\vp}{\varphi}
\newcommand{\vep}{\varepsilon}
\newcommand{\RR}{\mathbb{R}}
\newcommand{\CC}{\mathbb{C}}
\newcommand{\PP}{\mathbb{P}}
\newcommand{\sH}{\mathcal{H}}
\numberwithin{equation}{section}
\title{A note on modified $J$-Flow with the Calabi Ansatz}
\author[P. Sivaram]{P. Sivaram}
\address{Department of Mathematics, Indian Institute of Science, Bangalore, India - 560012}
\email{}
\begin{document}
\maketitle
\begin{abstract}
	We study the modified $J$-flow introduced in \cite{lishi2}, particularly the singularities of the flow using the Calabi symmetry. In \cite{tak}, on toric manifolds the convergence of modified $J$-flow to the smooth solution was proven under the assumption of positivity of certain intersection numbers. In the case of the Calabi ansatz we show that if some of those intersection numbers are not positive, then the modified $J$-flow blows up along some variety and away from the variety we prove the convergence to the solution.
	
	As in \cite{fanglai}, we also prove that the convergence behavior of the modified $J$-flow with Calabi symmetry depends on the topological constants $c$ and the minimum of the Hamiltonian function.
\end{abstract}
\section{Introduction}
There has been considerable progress in recent years on solvability of inverse Hessian equations going beyond the more classically studied complex Monge-Ampere equations. An equation that has been of particular interest, in part due to its relevance to constructing constant scalar curvature K\"ahler (cscK) metrics, is the $J$-equation. The $ J $-equation on K\"ahler manifolds was introduced by Donaldson \cite{don} in the moment map setting and by X. Chen \cite{jeqn} as the Euler-Lagrange equation for a functional that appears in his formula for the Mabuchi functional.  Let $(X,\omega)$ be a K\"ahler manifolds, and let $\chi$ be another K\"ahler metric. The $J$-equation seeks a K\"ahler metric $\chi_\varphi:= \chi+ \ddbar\varphi$ in the class of $[\chi]$ satisfying $$\La_{\chi_\varphi}\omega:= n\frac{\omega\wedge\chi_\vp^{n-1}}{\chi_\vp^n} = c,$$ where $c$ is necessarily given by $$c = n\frac{[\chi]^{n-1}\cdot[\omega]}{[\chi]^n}.$$  A consequence of Chen's formula for the Mabuchi energy (cf. \cite{swflow}) is that solvability of the $J$-equation in the class $[\chi] = c_1(K_M)$ implies existence of constant scalar curvature metrics.  

In \cite{bisec}, X. Chen introduced a natural flow, the so-called $J$-flow, to study existence of solutions to the $J$-equation and proved it's long-time existence and convergence under a bi-sectional curvature lower bound. The $ J $-flow is defined as follows:
\[
\begin{cases}
	\frac{\d \vp}{\d t}=c-\La_{\chi_\varphi}\omega\\
	\vp_t|_{t=0}=\vp_0 \in \sH,
\end{cases}
\]
where $ \sH=\{\vp:\chi_\vp=\chi+\ddbar\vp>0\}. $ Note that on K\"ahler surfaces, the $J$-equation reduces to a complex Monge-Ampere equation, and hence by Yau's celebrated resolution of the Calabi conjecture (cf.  \cite{yau}), a solution exists if and only if $[c\chi-\omega]$ is a K\"ahler class. More generally, in \cite{swflow}, Song and Weinkove proved that a solution to the $J$-equation in the class $[\chi]$ exists if and only if  there exists a  $ \vp\in \sH $ such that  $$ c\chi_\vp^{n-1}-(n-1)\omega\wedge\chi_\vp^{n-2}>0. $$ 

The interested readers can refer to \cite{flowsurface,flowhi} for earlier results, and \cite{Sz-pde} for an extension of this theorem to more general inverse Hessian equations. While this is an optimal PDE result, the condition above (called the cone condition) is a {\em pointwise} condition, and is generally difficult to verify. Inspired by the work of Demailly and Paun \cite{DemPa} and the close analogy between complex Monge-Ampere equations and the $J$-equation, Lejmi and Szekelyhidi \cite{stab} conjectured that a solution to the $J$-equation exists if and only if the following Nakai type criteria holds: $$ \int_V(c\chi^p-p\omega\wedge\chi^{p-1})>0, $$ for any $ p $-dimensional sub-variety $ V $ of the manifold. Note that the above condition is a numerical or topological condition, and at least in principle, much easier to verify. A uniform version of this conjecture was proved by G. Chen in \cite{gchen}. Based on the work of Chen, the full Lejmi-Szekelyhidi conjecture was established by Datar-Pingali  \cite{DP} on  projective manifolds and Song \cite{So} for general K\"ahler manifolds. 

In analogy with the role of the $J$-equation in studying the Mabuchi functional, the modified $ J $-equation was introduced by Li-Shi \cite{lishi2} to study  the modified Mabuchi functional. The setting is as follows: Suppose now, that $ \omega$ and $\chi $ are invariant under the action of a real torus $ T\subset Aut_0(X) $. Let $$ \sH^T=\{\vp\in C^\infty(X,\RR)^T:\chi_\vp=\chi+\ddbar\vp>0\} $$ be the $ T $-invariant K\"ahler potentials of $ \chi. $ Let $ \xi $ be a holomorphic vector field with $ Im(\xi)\in \mathfrak{t}. $ We define the Hamiltonian function of $ \xi $ with respect to the metric $ \chi $ as the real valued function uniquely determined by the following properties:
$$ i_\xi\chi=\frac{\sqrt{-1}}{2\pi}\dbar\theta_\xi(\chi) \text{ and } \int_{X}\theta_\xi(\chi)\chi^n=0. $$ 
The modified $ J $-equation is defined as \[\label{modjeqn} n\frac{\omega\wedge\chi_\vp^{n-1}}{\chi_\vp^n}=c+\theta_\xi(\chi_\vp), \] where $c$ is as before. Note that the above equation reduces to the $J$-equation if $ T $ is trivial.
In \cite{lishi2} Li-Shi proved that if there exist a $ \hat{\chi}\in [\chi] $ such that $$ (c+\theta_\xi(\hat{\chi}))\hat{\chi}^{n-1}-(n-1)\omega\wedge\hat{\chi}^{n-2}>0, $$ then there exist a unique $ \vp\in \sH^T $ which satisfies the modified $ J $-equation \eqref{modjeqn}. They proved the result using the parabolic flow method by defining the modified $ J $-flow by adapting the arguments in \cite{swflow}.The modified $ J $ flow is defined as 
\begin{equation}\label{modjflow}
	\begin{cases}
		\frac{\d \vp}{\d t}=c+\theta_\xi(\chi_\vp)-n\frac{\omega\wedge \chi_\vp^{n-1}}{\chi_\vp^n}\\
		\vp_t|_{t=0}=\vp_0 \in \sH^T.
	\end{cases}
\end{equation} 
In \cite{tak} Takahshi conjectured that the modified $ J $-equation has solution in the class $[\chi]$ if and only if $c+\theta_\xi(\chi)>0$ and the following Nakai criteria holds: For all $p$-dimensional toric sub-varieties $V\subset M$, $$ \int_V((c+\theta_\xi(\chi))\chi^p-p\omega\wedge\chi^{p-1})>0,$$ and verified this conjecture for toric manifolds (cf. \cite{CoSz} for an analogous result for the $J$-equation). 

The main goal of this note is to study the behaviour of the modified $J$-flow on the blow-ups $X_n = \mathrm{Bl}_{x_0}\PP^n$ of $\PP^n$. By Takahashi's result, the existence problem is completely settled, and so we focus instead on the case when the Nakai criteria {\em fails}. These are also examples of ruled surfaces. Indeed $X_n = \PP(\mathcal{O}_{\PP^{n-1}}(-1)\oplus \mathcal{O})$, and one can study the modified $J$-flow using the Calabi ansatz. For the $J$-flow, Fang and Lai  obtained a complete description of the behaviour of the $J$-flow on these manifolds using the Calabi ansatz in \cite{fanglai}. In particular, that even when the Nakai criteria fails, the flow still converges on the complement of the exceptional divisor $E$ to a solution of the $J$-equation on $X_n\setminus E$ with a different slope. Our main goal is to extend these results to the modified $J$-flow on these manifolds.

To state our main result, we introduce some notation. Given real numbers $a,b>1$ and $k\geq 0$, we let $$ c_k=n\frac{ab^{n-1}-1}{b^n-1}-\frac{nk}{n+1}\frac{b^{n+1}-1}{b^n-1}. $$ When $k\geq 0$, we characterise the behaviour of the modified $J$ flow using the constant $c_k+k$. 
\begin{thm}\label{mt1}
	Let $ X_n=\PP^n\#\overline{\PP^n} $ be the blow up of $ \PP^n $ at one point with two K\"ahler metrics $ \omega \in a[E_\infty]-[E_0] $ and $ \chi\in b[E_\infty]-[E_0] $, where $ E_0 $ and $ E_\infty $ be the exceptional divisor and the pull-back of the divisor associated to $ \mathcal{O}_{\PP^n}(1) $ respectively and $\xi=kw\frac{\d}{\d w}$,where $w$ is the fiberwise coordinate of $\mathcal{O}_{\PP^{n-1}}(-1)$ and $k\geq 0$. And let $ \chi_t $ be the solution of the modified $ J $-flow.
	Then the following three cases characterize the convergence behavior of the modified $ J $-flow:
	\begin{enumerate}
		\item If $ c_k+k>n-1 $, then $ \chi_t\rightarrow \chi_\infty $ as $ t\to \infty $ on $ X_n $ smoothly and $ \chi_\infty $ is the solution of the modified $ J $ equation.
		\item If $ c_k+k=n-1, $ then $ \chi_t\rightarrow \chi_\infty $ as $ t\to \infty $ on $ X_n\smallsetminus E_0 $ smoothly, where $ \chi_\infty=\chi+\ddbar\vp,\text{ for some } \vp \in L^\infty(X_n), $ a singular K\"ahler metric has a conical singularity along $ E_0 $ and smooth everywhere else. And the flow converges to the solution of the equation $$ n\frac{\omega\wedge\chi_\infty^{n-1}}{\chi_\infty^n}=c+\theta_\xi(\chi_\infty), \text{ on }X_n\smallsetminus E_0. $$
		\item If $ 0<c_k+k<n-1 $, then $ \chi_t\to \hat{\chi}_\infty+(\la-1)[E_0] $, a K\"ahler current as $ t\to \infty $ on $ X_n\smallsetminus E_0 $ smoothly, where $ \la \in (1,b) $ is unique such that $$ n\frac{ab^{n-1}-\la^{n-1}}{b^n-\la^n}-\frac{nk}{n+1}\frac{b^{n+1}-\la^{n+1}}{b^n-\la^n}+k\la=\frac{n-1}{\la}. $$
		And the flow converges to the solution of the equation $$ n\frac{\omega\wedge\hat{\chi}_\infty^{n-1}}{\hat{\chi}_\infty^n}= n\frac{ab^{n-1}-\la^{n-1}}{b^n-\la^n}+\theta_\xi(\hat{\chi}_\infty), \text{ on } X_n\smallsetminus E_0. $$
	\end{enumerate}
\end{thm}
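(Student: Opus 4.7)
\medskip

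\noindent\textbf{Proof Plan.} The strategy is to follow the Fang--Lai reduction for the $J$-flow in \cite{fanglai}, but now carrying the Hamiltonian contribution $\theta_\xi$ through every step. The key simplification is that the action of $U(n)$ fixing the blown-up point, together with the fiberwise $S^1$-action generated by $\xi$, makes the entire modified $J$-flow collapse to a parabolic ODE on an interval.

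\medskip

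\noindent\textbf{Step 1: Calabi ansatz reduction.} I would first realise $X_n = \mathbb{P}(\mathcal{O}_{\PP^{n-1}}(-1)\oplus\mathcal{O})$, identify $E_0$ with the zero section and $E_\infty$ with the section at infinity, and write any $U(n)$-invariant K\"ahler metric in $b[E_\infty]-[E_0]$ in terms of a smooth profile $u:[1,b]\to \mathbb{R}$ of a moment-type variable $\tau$ (so that $\tau=1$ corresponds to $E_0$ and $\tau=b$ to $E_\infty$), using the standard Legendre-transform description. The background metric $\omega\in a[E_\infty]-[E_0]$ is encoded in a fixed profile $p_\omega$. Since the solution $\chi_t$ of \eqref{modjflow} inherits the $U(n)\times S^1$-symmetry, $\chi_t$ corresponds to a one-parameter family $u(\tau,t)$. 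For $\xi=kw\d_w$, the Hamiltonian $\theta_\xi(\chi_\varphi)$ can be computed explicitly in this framework: up to a normalising constant fixed by $\int_X\theta_\xi\,\chi_\varphi^n=0$, it equals $k\tau$ minus its $\chi_\varphi$-average, and this average is exactly what produces the term $\frac{nk}{n+1}\frac{b^{n+1}-1}{b^n-1}$ in the definition of $c_k$. Substituting into \eqref{modjflow} reduces the flow to a parabolic ODE for $u(\tau,t)$ on $[1,b]$ whose stationary version reads, schematically,
\[
 n\,\frac{p_\omega'(\tau)\,(\text{derivatives of } u)}{(\text{derivatives of }u)} \;=\; c + k\tau - (\text{const.}),
\]
with the quantity $c_k+k$ emerging as the value of the right-hand side at the free endpoint $\tau=1$.

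\medskip

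\noindent\textbf{Step 2: Case analysis of stationary solutions.} I would then classify smooth/degenerate stationary solutions of this ODE. A smooth $U(n)\times S^1$-invariant solution on all of $X_n$ requires the boundary slope condition at $\tau=1$ to be satisfied, and a direct computation identifies this with $c_k+k > n-1$; this is case (1), where Takahashi's Nakai condition from \cite{tak} holds on every invariant subvariety. When $c_k+k\le n-1$, the smooth problem on $[1,b]$ has no solution, and one looks instead for a solution on $[\lambda,b]$ with the Dirichlet-type free-boundary condition at $\tau=\lambda$ that mimics an exceptional divisor of ``size'' $\lambda$ being contracted. Matching the boundary slope gives precisely the equation
\[
 n\,\frac{ab^{n-1}-\lambda^{n-1}}{b^n-\lambda^n} -\frac{nk}{n+1}\frac{b^{n+1}-\lambda^{n+1}}{b^n-\lambda^n}+k\lambda \;=\; \frac{n-1}{\lambda}.
\]
Existence and uniqueness of $\lambda\in(1,b)$ follow by monotonicity in $\lambda$ of the left-hand side, combined with the boundary values $\lambda=1$ (giving $c_k+k$) and $\lambda=b$ (giving $+\infty$); case (2) is the borderline $\lambda=1$, case (3) the strict one.

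\medskip

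\noindent\textbf{Step 3: Long-time convergence of the flow.} Long-time existence of the ODE is standard once one has a priori $C^0$ bounds. To get convergence, I would differentiate the flow in $t$ and apply a maximum principle in $\tau$ to show that an appropriate monotone functional of $u_\tau$ is dissipated. In case (1) this yields uniform $C^k$-bounds on $[1,b]$ and smooth convergence to the stationary profile, hence smooth convergence of $\chi_t\to\chi_\infty$ on $X_n$. In case (3), the same dissipation gives bounded geometry only on $[\lambda+\delta,b]$ for each $\delta>0$, corresponding to smooth convergence on compact subsets of $X_n\smallsetminus E_0$; the limit profile is continuous up to $\tau=\lambda$ with a corner, which translates into $\chi_t\to\hat\chi_\infty+(\lambda-1)[E_0]$ as a K\"ahler current. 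Case (2) is the most delicate: the profile still converges but the derivative at $\tau=1$ blows up at a precise rate controlled by the linearisation, producing the conical singularity along $E_0$ rather than an integer Lelong-number jump.

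\medskip

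\noindent\textbf{Main obstacle.} The routine part is the reduction and the algebraic identification of $c_k+k$ and of $\lambda$. The substantial analytic obstacle is case (2), where the flow is ``marginally'' singular: one must upgrade a one-sided barrier estimate into a proof that the potential remains $L^\infty$ with a conical but not worse singularity, and that convergence is smooth on $X_n\smallsetminus E_0$. This is where the Hamiltonian contribution $\theta_\xi(\chi_\varphi)=k\tau-\text{const.}$ intervenes non-trivially, because the sign and size of $k$ control the rate at which $u_\tau\to 1$, and hence the precise type of singularity produced at $E_0$.
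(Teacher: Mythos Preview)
Your overall architecture---Calabi ansatz reduction to a profile $\psi(\tau,t)$, classification of stationary solutions via the free-boundary parameter $\lambda$, then parabolic convergence---matches the paper's. But Step~3 glosses over precisely the places where the modification by $\theta_\xi$ creates new difficulties, and your diagnosis of the main obstacle is off. The Fang--Lai monotonicity argument you invoke starts the reduced flow from the \emph{linear} profile joining $(1,1)$ to $(b,a)$, because for the ordinary $J$-flow one has $P[\psi_0]\le 0$ and this drives the time-monotonicity via the maximum principle. For the modified operator $P[\psi]=\psi''+(n-1)\psi'/\tau-(n-1)\psi/\tau^2-k$ this fails: when $k>0$ the linear profile can give $P[\psi_0]$ of indeterminate sign (the paper gives the explicit counterexample $n=2$, $a=12$, $b=6$, $k=1$). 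One must instead manufacture a concave initial profile, in the paper $\psi_0(\tau)=\frac{a-1}{b^{-n}-1}(\tau^{-n}-1)+1$, and check both $P[\psi_0]\le 0$ and $\psi_0\ge\sigma$ before any maximum-principle step can begin. Your ``differentiate in $t$ and dissipate'' would not go through without this replacement.

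Second, the ODE convergence of $\psi(\tau,t)$ on $(\lambda,b)$ is only $C^1_{loc}$ and says nothing by itself about smooth convergence of the metrics $\chi_t$ on $X_n\setminus E_0$. The paper obtains this via a Song--Weinkove style second-order estimate $\Lambda_\omega\chi\le Ce^{Au}$, which requires building a \emph{strict} subsolution $V_\infty=f_\infty+\varepsilon\nu$ in the perturbed class $b[E_\infty]-(\lambda+\varepsilon)[E_0]$; the limiting potential $f_\infty$ is not itself a strict subsolution because equality holds at $s=-\infty$. This barrier construction, and the subsequent bootstrapping, is absent from your plan. Moreover, everything above is for the special initial data; passing to an arbitrary Calabi-symmetric initial metric needs a separate comparison argument plus decay of the modified $J$-energy $\int(\Lambda_\chi\omega-\theta_\xi(\chi))^2\chi^n$ to identify the limit uniquely. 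Finally, Cases~2 and~3 are not analytically distinguished as you suggest: both are handled by the same estimates away from $E_0$, with $\lambda=1$ versus $\lambda>1$ falling out of the ODE limit. The real obstacles are the failure of the linear initial profile and the need for the perturbed subsolution in the $C^2$ estimate, not the conical analysis in Case~2.
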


To conclude this section, we make a few remarks on some ongoing work, and possible future directions. Firstly we note that in a work in progress, we introduce a notion of Futaki invariant for the modified $J$-equation, and prove a Atiyah-Bott type lower bound for an $L^2$-energy in terms of the Futaki invariant, obtaining a modified $J$-equation analogue of a similar result due to Lejmi and Szekelyhidi. As in \cite{stab}, using the above convergence result, we can improve the lower bound to an equality in the case of the manifolds $X_n$.  In another direction, one can also obtain a similar convergence result for more general projective bundles where the zero section may have a higher co-dimension. In \cite{vrj}, the authors develop a program for obtaining weak solutions to the $J$-equation even when the Nakai criteria fails, and obtain weak solutions on K\"ahler surfaces. It would be interesting to develop an analogous program for the modified $J$-equation. Unlike in the case of the $J$-equation, finding smooth solutions to the modified equation on K\"ahler surfaces is already non-trivial (cf. \cite{lishi2}). In light of this, one would expect additional difficulties in adapting the arguments in \cite{vrj} to the modified setting. 

\section{Modified $ J $-flow with the Calabi Ansatz}
\subsection{Modified $J$-equation with the Calabi Ansatz:}
We first review the ansatz of Calabi  to construct K\"ahler metrics on certain ruled surfaces. The interested reader can refer to \cite{Calabi, HS, gabor-book} for more details. Let $ X_n $ denote blow-up of  $ \PP^n $ at one point. Equivalently, $ X_n=\PP(\mathcal{O}_{\PP^{n-1}}(-1)\oplus\mathcal{O}) $. Let $ h $ be a metric on $ \mathcal{O}_{\PP^n}(-1) $ with curvature $ -\sqrt{-1}2\pi \omega_{FS} $, and write $ s=\log |.|_h $ for the log of the fiberwise norm. Denote by $ [E_0] $ and $ [E_\infty] $ the zero section (exceptional divisor) and the infinity section respectively. The K\"ahler cone is then given by $$ \mathcal{K}_{X_n}= \{ \be [E_\infty]-\al [E_0]: \be >\al>0 \} .$$ For an appropriate choice of strictly increasing convex function $ f \in \mathcal{C}^\infty(\RR) $ we can write down a K\"ahler metric $$ \omega=\ddbar f(s) $$
on $ X_n\smallsetminus(E_0\cup E_\infty) \cong \CC^n\smallsetminus\{0\}. $ At a point choose local coordinates $ z=(z_1,z_2,\dots,z_{n-1}) $ on $ \PP^{n-1} $ and a fiberwise coordinate $ w $ such that $ d\log h(z)=0. $  At this point we then have that $$ \omega=\sqrt{-1}f'(s)\omega_{FS}+f''(s)\frac{\sqrt{-1}dw\wedge d\bar{w}}{|w|^2}. $$
In order to extend $ \omega $ to a smooth K\"ahler metric on $ X_n $, the following asymptotic properties of $ f $ are required:
\begin{enumerate}[(i)]
	\item $ F_0(t):=f(\log t)-\al\log t $ extends to a smooth function at $ t=0 $, and $ F_0'(0)>0. $
	\item $ F_\infty(t):=f(-\log t)+ \be \log t $ extends to a smooth function at $ t=0 $, and $ F_\infty'(0)>0. $
\end{enumerate}
By the asymptotic behavior of $ f, $ we have $$ \lim_{s \to -\infty}f'(s)=\al, \lim_{s \to \infty}f'(s)=\be, $$
and the K\"ahler class of $\omega $ is then given by $\be [E_\infty]-\al [E_0]. $

We can repeat this construction for a different convex function $ g \in \textbf{C}^\infty(\RR) $ and consider $$ \chi=\ddbar g(s). $$
We normalize the K\"ahler classes so that $$ \omega \in a[E_\infty]-[E_0], \chi \in b[E_\infty]-[E_0] ;\quad a,b>1. $$
We now write down the modified $J$-equation using Calabi ansatz.
We introduce a moment map coordinate $ \tau=f'(s), $ and define the strictly increasing function $ \psi\colon[1,b] \to [1,a] $ by letting $$ \psi(f'(s))=g'(s), \forall~s\in \RR. $$ 

We consider the $ \mathbb{S}^1 $ action on $ X_n $ of the form
$$ e^{\sqrt{-1}\theta}\cdot (z,w)=(z,e^{\sqrt{-1}k \theta}w). $$
The vector field generating the action is given by $ \xi=kw\frac{\d}{\d w}. $ Since  $$ i_{\xi} \chi=i_{\xi} \left(f''(s)\frac{\sqrt{-1}dw\wedge d\bar{w}}{|w|^2}\right)=\sqrt{-1}~\dbar (kf'(s)), $$  the normalized Hamiltonian function in the momentum coordinate is $$ \theta_\xi(\chi)=k\left( \tau-\frac{n}{n+1} \frac{b^{n+1}-1}{b^n-1}\right) $$
then the modified $ J $ equation can be written as 
\[\label{jeqn}  \psi'(\tau)+(n-1)\frac{\psi(\tau)}{\tau}=c_k+k\tau,  \]
where $$ c_k=c-\frac{kn}{n+1}\frac{b^{n+1}-1}{b^n-1}  \text{ and }  c=n\frac{ab^{n-1}-1}{b^n-1}. $$ A solution must be strictly increasing and also satisfy the boundary conditions $ \psi(1)=1 $ and $ \psi(b)=a. $ If we take the derivative of the equation \eqref{jeqn} with respect to $\tau$ then we get a second order ODE associated to the modified $J$ equation
\begin{equation}\label{ode}
	\psi''(\tau)+(n-1)\frac{\psi'(\tau)}{\tau}-(n-1)\frac{\psi(\tau)}{\tau^2}-k=0.
\end{equation}
With the boundary conditions $ \psi(1)=1 $ and $ \psi(b)=a, $ we obtain a unique solution 
\begin{equation}\label{psi tilde}
	\tilde{\psi}(\tau)=\frac{k\tau^2}{n+1}+\frac{c_k }{n}\tau +\left(\frac{n}{n+1}-\frac{c_k}{n}\right)\frac{1}{\tau^{n-1}}.
\end{equation}
We will use the equation \eqref{ode} to define the modified $J$-flow and we also see that the presence of the quadratic term in the solution \eqref{psi tilde} makes the analysis of the equation different from the $J$-flow ODE. Note that if we take $k=0$, then we recover the $J$-equation.

From now onwards we assume that $ k\geq 0 $.
\begin{rem}\label{the constant}
	\begin{enumerate}[(a)]
		\item For the $J$-equation with the Calabi ansatz, solvability of the equation depends on the topological constant $c=n\frac{[\omega]\cdot[\chi]^{n-1}}{[\chi]^n}$. Analogous to that for the modified $J$-equation with Calabi ansatz, solvability depends on the minimum of the normalized hamiltonian and the constant $c$, that is we get a necessary and sufficient condition for the existence of a smooth solution to the modified $J$-equation from $$\int_{E_0}\left((c+\theta_\xi(\chi))\chi^{n-1}-(n-1)\omega\wedge \chi^{n-2}\right)=c_k+k-(n-1)>0 $$ and $$ c_k+k=\min_{X_n}\big(c+\theta_\xi(\chi)\big).$$
		\item By Lemma 2.1 of \cite{tak}, the constant $\min_{X_n}(c+\theta_\xi(\chi))$ is independent of the metric $\chi$ we pick from the class $[\chi]$ in general.
	\end{enumerate}
\end{rem}
\begin{thm}\label{critical}
	Let $X_n$ be a blow up of $ \PP^n $ at one point and  $ \omega \in a[E_\infty]-[E_0]$ and $ \chi \in b[E_\infty]-[E_0] $ to be two K\"ahler classes. If  $ c_k+k >n-1, $ then there exist a solution to the modified $ J $-equation \eqref{ode}.
\end{thm}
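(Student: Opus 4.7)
The strategy is to verify that the explicit function $\tilde{\psi}$ in \eqref{psi tilde}, obtained by integrating the second-order linear ODE \eqref{ode} subject to the boundary conditions $\tilde{\psi}(1)=1$ and $\tilde{\psi}(b)=a$, is a genuine solution of the Calabi-ansatz modified $J$-equation. By construction the boundary values are correct (indeed $\tilde{\psi}(b)=a$ is precisely the algebraic identity encoded in the definition of $c_k$), so the only nontrivial point is to establish the strict monotonicity $\tilde{\psi}'(\tau)>0$ for all $\tau\in[1,b]$. Once this holds, $\tilde{\psi}$ is smooth on $[1,b]$ with $\tilde{\psi}(1)=1$, $\tilde{\psi}(b)=a$, and $\tilde{\psi}'>0$ at both endpoints, which are precisely the asymptotic behaviors required by the Calabi ansatz reviewed above to produce a smooth K\"ahler form $\chi_\vp\in[\chi]$ solving the modified $J$-equation.

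To prove $\tilde{\psi}'>0$ on $[1,b]$, I would start at the left endpoint: substituting $\tau=1$ into the first-order equation \eqref{jeqn} and using $\tilde{\psi}(1)=1$ gives
\begin{equation*}
\tilde{\psi}'(1)=c_k+k-(n-1),
\end{equation*}
which is strictly positive by hypothesis. Setting $u:=\tilde{\psi}'$ and arguing by contradiction, suppose $u$ has a first zero at some $\tau_0\in(1,b]$, so $u>0$ on $[1,\tau_0)$ and $u(\tau_0)=0$. Then $\tilde{\psi}$ is strictly increasing on $[1,\tau_0)$, hence $\tilde{\psi}(\tau_0)>\tilde{\psi}(1)=1>0$. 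Rewriting the ODE \eqref{ode} as
\begin{equation*}
u'(\tau)=-(n-1)\frac{u(\tau)}{\tau}+(n-1)\frac{\tilde{\psi}(\tau)}{\tau^2}+k,
\end{equation*}
evaluating at $\tau_0$, and using $u(\tau_0)=0$ together with $k\geq 0$, one obtains $u'(\tau_0)=(n-1)\tilde{\psi}(\tau_0)/\tau_0^{2}+k>0$. On the other hand, since $u$ is $C^1$ and decreases to $0$ from strictly positive values on $[1,\tau_0)$, its derivative at $\tau_0$ must satisfy $u'(\tau_0)\leq 0$. This contradiction forces $u>0$ on all of $[1,b]$.

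The only real mechanism is this first-zero/maximum-principle argument; the hypothesis $c_k+k>n-1$ enters solely through the initial estimate $\tilde{\psi}'(1)>0$, after which the structure of \eqref{ode} propagates positivity across the interval, crucially exploiting the positivity of $\tilde{\psi}$ itself (which is in turn propagated from $\tilde{\psi}(1)=1$ by the hypothetical monotonicity on $[1,\tau_0)$). The main subtlety I had to think about is that the explicit expression \eqref{psi tilde} does not transparently reveal the sign of $\tilde{\psi}'$ at the right endpoint $\tau=b$, since the coefficient of the $\tau^{-(n-1)}$ term can be of either sign depending on $c_k$ and $k$; the indirect ODE-based argument above is the natural way around this obstacle.
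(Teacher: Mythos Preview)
Your argument is correct, but the paper's proof is different in flavor: it works directly with the explicit formula for $\tilde\psi'$. Writing
\[
\tilde\psi'(\tau)=\frac{2k\tau}{n+1}+\frac{c_k}{n}-(n-1)\Big(\frac{n+1-k}{n+1}-\frac{c_k}{n}\Big)\frac{1}{\tau^n},
\]
the paper observes that this expression is increasing in $c_k$ (the combined coefficient of $c_k$ is $\frac{1}{n}(1+(n-1)\tau^{-n})>0$), substitutes the lower bound $c_k>n-(k+1)$, and then minimizes the resulting $\tau$-dependent expression over $\tau\ge 1$; the minimum, attained at $\tau=1$, is exactly $0$. So the paper shows $\tilde\psi'(\tau)>0$ by a one-line explicit estimate.

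Your route is more structural: you use only $\tilde\psi'(1)>0$ (which you read off from the first-order equation \eqref{jeqn}) and then propagate positivity via a first-zero argument based on the second-order ODE \eqref{ode}. This is essentially a maximum-principle argument and would apply even if no closed-form solution were available. Your remark that the explicit formula ``does not transparently reveal the sign of $\tilde\psi'$'' is a little pessimistic---the paper's computation shows it can be done directly---but your approach is arguably cleaner in that it isolates exactly how the hypothesis $c_k+k>n-1$ enters (only at $\tau=1$) and avoids any case analysis on the sign of the $\tau^{-(n-1)}$ coefficient.
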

\begin{proof}
	The solution of the ODE \eqref{ode} is given by \[\label{soln jeqn} \psi(\tau)=\frac{k\tau^2}{n+1}+\frac{c_k}{n}\tau+\left(\frac{n+1-k}{n+1}-\frac{c_k}{n}\right)\frac{1}{\tau^{n-1}}. \]
	Given that $ c_k>n-(k+1). $ Then
	\begin{align*}
		\psi'(\tau)&=\frac{2k\tau}{n+1}+\frac{c_k}{n}-(n-1)\left(\frac{n+1-k}{n+1}-\frac{c_k}{n}\right)\frac{1}{\tau^n}\\
		&>\frac{2k}{n+1}+\frac{n-(k+1)}{n}-(n-1)\left(\frac{n+k+1}{n(n+1)}\right)\\
		&= 0.
	\end{align*}	
	This implies that $ \psi $  has an inverse. So, $ f'(s)=\psi^{-1}(g'(s)), \text{ for all } s \in \RR. $ Since $ f $ satisfies the properties of the Calabi ansatz we can get a metric which solves the modified $ J $-equation \eqref{modjeqn}.
\end{proof}
As a consequence we have the following rotationally symmetric version of Conjecture 1.4 of \cite{tak}.
\begin{cor}
	\label{iffcondition}Let $ X_n $ be the blow up of $ \PP^n $ at one point and $ \omega \in a[E_\infty]-[E_0]$ and $ \chi \in b[E_\infty]-[E_0] $ to be two K\"ahler classes. Then the following are equivalent 
	\begin{enumerate}
		\item The modified $J$-equation \eqref{modjeqn} has a solution.
		\item 	$c+\theta_\xi(\chi)>0$ and for each $ p\in \{1,2,\dots,n-1\} $ and $ p$-dimensional irreducible sub-variety $ V^p \subset X $, $$ \int_{V^p}\left((c+\theta_\xi(\chi))\chi^p-p\omega\wedge\chi^{p-1}\right)>0. $$
		\item $ c_k+k>n-1. $
	\end{enumerate}
\end{cor}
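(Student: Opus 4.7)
The plan is to close the cycle $(3) \Rightarrow (1) \Rightarrow (2) \Rightarrow (3)$, with Theorem \ref{critical} and Remark \ref{the constant}(a) as the principal inputs. The implication $(3) \Rightarrow (1)$ is exactly Theorem \ref{critical}; and $(2) \Rightarrow (3)$ follows at once by specializing the Nakai hypothesis in $(2)$ to the $T$-invariant exceptional divisor $V = E_0$ and using Remark \ref{the constant}(a) to identify
\[
\int_{E_0}\bigl((c+\theta_\xi(\chi))\chi^{n-1}-(n-1)\omega\wedge\chi^{n-2}\bigr) = c_k + k - (n-1).
\]

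The main task is $(1) \Rightarrow (2)$. I would begin by fixing a solution $\chi_\vp \in [\chi]$ to the modified $J$-equation \eqref{modjeqn} and, at each point, diagonalizing $\chi_\vp^{-1}\omega$ with positive eigenvalues $\lambda_1, \ldots, \lambda_n > 0$, so that the equation reads $\sum_i \lambda_i = c + \theta_\xi(\chi_\vp)$. For each $1 \leq p \leq n-1$ and any $p$-subset $I \subset \{1,\dots,n\}$,
\[
(c + \theta_\xi(\chi_\vp)) - \sum_{i \in I}\lambda_i = \sum_{j \notin I}\lambda_j > 0,
\]
which is equivalent to the strict pointwise positivity of the $(p,p)$-form
\[
(c + \theta_\xi(\chi_\vp))\chi_\vp^p - p\,\omega \wedge \chi_\vp^{p-1} > 0.
\]
Since $c + \theta_\xi(\chi_\vp) = \sum_i \lambda_i > 0$ pointwise, and by Remark \ref{the constant}(b) the minimum of $c + \theta_\xi(\cdot)$ is an invariant of $[\chi]$, the same gives $c + \theta_\xi(\chi) > 0$ pointwise. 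Integrating the positive $(p,p)$-form against the current of integration of any irreducible $p$-dimensional subvariety $V^p$ then yields strict positivity of the Nakai integral when computed with $\chi_\vp$. To replace $\chi_\vp$ by $\chi$, I would use that the integrand is $T$-invariant: for $T$-invariant $V^p$ the integral is an equivariant-cohomological invariant of $[\chi]$; for general $V^p$, I would average the cycle over the torus action and reduce to the invariant case, using the $T$-invariance of the integrand.

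The hard part will be this last step — verifying that the Nakai integral for the original metric $\chi$ agrees with that for the solution $\chi_\vp$ on non-$T$-invariant subvarieties, since $\int_V \theta_\xi(\chi)\chi^{p-1}$ is not purely cohomological for such $V$. The equivariant cohomology / torus-averaging argument sketched above handles this; alternatively, and more cleanly, one could invoke Takahashi's resolution \cite{tak} of the toric case of the Nakai-type criterion on $X_n$, which directly yields $(2) \Rightarrow (1)$ from Nakai positivity on toric subvarieties and bypasses the non-invariant case altogether.
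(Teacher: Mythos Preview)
Your cycle $(3)\Rightarrow(1)\Rightarrow(2)\Rightarrow(3)$ and the ingredients for each implication match the paper's proof exactly: Theorem~\ref{critical} for $(3)\Rightarrow(1)$; diagonalizing $\omega$ and the solution metric at a point to obtain the pointwise positivity $(c+\theta_\xi)\chi^p-p\,\omega\wedge\chi^{p-1}>0$ and then integrating over $V^p$ for $(1)\Rightarrow(2)$; and specializing to $V=E_0$ (which is precisely the computation in Remark~\ref{the constant}(a)) for $(2)\Rightarrow(3)$.

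The only divergence is your extended discussion of how to pass from $\chi_\vp$ back to the original $\chi$ in condition~(2) on non-$T$-invariant subvarieties. The paper simply does not engage with this: at the outset of the $(1)\Rightarrow(2)$ step it writes ``by assumption there exist $\chi\in b[E_\infty]-[E_0]$ which solves the modified $J$-equation,'' i.e.\ it takes the $\chi$ appearing in~(2) to \emph{be} the solution metric. With that reading, your torus-averaging/equivariant-cohomology argument and the fallback to \cite{tak} are unnecessary additions; the paper's version is shorter but correspondingly less explicit about what~(2) means for a fixed background $\chi$.
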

\begin{proof}	
	For $(1)$ implies $(2)$, by assumption there exist $ \chi \in b[E_\infty]-[E_0] $ which solves the modified $ J $-equation \eqref{modjeqn}
	$$ n\frac{\omega\wedge \chi^{n-1}}{\chi^n}=c+\theta_\xi(\chi). $$
	Let us choose a coordinate at a point $ y\in X_n $ such that $ \omega_{ij}=\delta_{ij} $ and $ \chi_{ij}=\mu_i\delta_{ij}, \mu_i>0 \text{ for } 1\leq i,j\leq n. $
	At this point the modified $ J $-equation will be of the form 
	$$ \sum_{i=1}^{n}\frac{1}{\mu_i}=c+\theta_\xi(\chi). $$
	This implies that $c+\theta_\xi(\chi)>0$, since $y\in X_n$ was arbitrary and for any $ \{i_1,i_2,\dots,i_p\} \subset \{1,2,\dots,n\} $ we have 
	$$ \sum_{r=1}^{p}\frac{1}{\mu_{i_r}}< c+\theta_\xi(\chi) $$
	So $ (c+\theta_\xi(\chi))\chi^p-(p-1)\omega\wedge\chi^{p-1}>0. $
	And if we integrate the form $ (c+\theta_\xi(\chi))\chi^p-(p-1)\omega\wedge\chi^{p-1} $ over any $ p $-dimensional irreducible sub-variety $ V^p $ we get $$ \int_{V^p} (c+\theta_\xi(\chi))\chi^p-(p-1)\omega\wedge\chi^{p-1}>0. $$
	For $(2)$ implies $(3)$, we have $ [E_\infty]|_{E_0}\equiv0 $ and $ [E_0]|_{E_0}\equiv -\omega_{FS} $ and we can take $ V^{n-1}=E_0. $ Then 
	\begin{align*}
		\int_{E_0}\left((c+\theta_\xi(\chi))\chi^{n-1}-(n-1)\omega\wedge\chi^{n-1}\right)&=\int_{E_0}\left((c_k+k)\chi^{n-1}-(n-1)\omega\wedge\chi^{n-2}\right)\\
		&=c_k+k-(n-1).
	\end{align*}
	Thus $ c_k+k>n-1 $ follows from the given hypothesis.
	And $(3)$ implies $(1)$ follows from Theorem \ref{critical}.
\end{proof}
\begin{rem}
	\begin{enumerate}[(a)]
		\item An important point to note here is that for the $ J $-equation if $ \omega $ and $ \chi $ belong to the same K\"ahler class, then we can always solve the $ J $-equation. But this is not the case for the modified $ J $-equation. 
		For example on $ X_2 $ with $ k=1 $, there is no $ \omega,\chi\in 3[E_\infty]-[E_0] $ such that $$ 2\frac{\omega\wedge\chi}{\chi^2}=2+\theta_\xi(\chi). $$ 
		Since $ c_1=\frac{-1}{6}<0 $ in this case.
		Also consider $ \omega\in 7[E_\infty]-[E_0] $ and $ \chi \in 6[E_\infty]-[E_0] $  in the blow up of $ \PP^2 $ at a point. Then $ c_1<0. $ Thus the modified $ J $-equation can not have a solution in $ [\chi] $. Hence $a\geq b$ does not guarantee the solution for the modified $J$ equation, unlike the $J$-equation.
		\item If $(\omega,\chi)$ is $J$-unstable, that is $c<n-1$, then we can not have solution for the modified $J$-equation on that particular pair for any $k\geq 0$, since $$k\left(1-\frac{n}{n+1}\frac{b^{n+1}-1}{b^n-1}\right)<0.$$
		\item When $ k\geq 0, $ we have $ c_k+k>n-1 $ as a necessary and sufficient condition for the existence of the solution of the modified $ J $-equation. This provides the upper bound on the value of $ k $, that is $$ 0\leq k < \left(1+n\frac{ab^{n-1}-1}{b^n-1}-n\right)\left(\frac{n}{n+1}\frac{b^{n+1}-1}{b^n-1}-1\right)^{-1}. $$
		\item  If we choose the $ k $ to be negative, then to get a sufficient condition for the existence of the increasing function we have to integrate over the sub-variety $ E_\infty. $
	\end{enumerate}
\end{rem}

Now we turn our attention to the unstable case, that is when $c_k+k<n-1.$
In this case the general solution\eqref{soln jeqn} of equation \eqref{ode} is not increasing in $[1,b]$.
Instead we consider the following auxiliary family of equations
\begin{equation}\label{sjeqn}
	\begin{cases}
		\psi''(\tau)+(n-1)\frac{\psi'(\tau)}{\tau}-(n-1)\frac{\psi(\tau)}{\tau^2}-k=0, &\tau\in (s,b)\\
		\psi(s)=1 \text{ and } \psi(b)=a, &\text{ where } s \in (1,b).
	\end{cases}
\end{equation}
Solution of the above equation is $$ \psi_s(\tau)=\frac{\tau^2}{n+1}+\frac{C_k(s)}{n}\tau+\frac{A_k(s)}{\tau^{n-1}}. $$ Using the boundary conditions to solve for $ C_k(s) \text{ and } A_k(s) $, we get \[\label{lam} C_k(s)=n\frac{ab^{n-1}-s^{n-1}}{b^n-s^n}-\frac{kn}{n+1}\frac{b^{n+1}-s^{n+1}}{b^n-s^n} \text{ and } A_k(s)=s^{n-1}\frac{n+1-ks^2}{n+1}-s^n\frac{C_k(s)}{n}. \]

Let 
$$	\mathcal{S}= \{ s \in (1,b):\text{ there exist } \psi_s \text{ solves } \eqref{sjeqn} \text{ with } \psi_s'(\tau)>0, \text{ for all }\tau\in[s,b] \}, $$ and $$ \la=\inf\mathcal{S}. $$

Since the solution \eqref{soln jeqn} $ \psi $ of the ODE \eqref{ode} is convex and $ \psi'(1)<0 $ in the case $c_k+k<n-1 $ there exist a unique $ \bar{s} $ such that $ \psi'(\bar{s})=0 $ and there exist a unique $ s\in (\bar{s},b) $ such that $ \psi(s)=1. $ Hence by definition $ s $ belong to the set $ \mathcal{S} $ and $ 1<\la\leq s $.

Then $ \la $ satisfies the following equation $$ C_k(\la)+k\la=\frac{n-1}{\la} $$ which follows from the equation $\psi_\la'(\la)=0$, by the definition of $\la.$

The left hand side of the above  equation (in analogy to the $J$-equation) is related to the topological constant of the modified $J$-equation as follows:

Let $\chi_\la\in b[E_\infty]-\la[E_0]$ and $\omega\in a[E_\infty]-[E_0]$. Then $$C_k(\la)+k\la=\min_{X_n}\left( n\frac{[\omega]\cdot[\chi_\la]^{n-1}}{[\chi_\la]^n}+\theta_\xi(\chi_\la)\right). $$

Suppose for $[\chi]=b[E_\infty]-[E_0], [\omega]=a[E_\infty]-[E_0]$ and vector field $\xi=kw\frac{\d}{\d w}(k\geq 0)$, we get that $0<c_k+k<n-1$, we claim that we can perturb the K\"ahler class of $\chi$ so that we can solve the modified $J$-equation in the perturbed class. And that perturbation is measured by the constant $\la$ that we introduced above.

For any $s\in (1,b)$, let $\chi_s\in b[E_\infty]-s[E_0]$ be any K\"ahler metric and we know that $$ C_k([\chi_s],[\omega])+ks=\min_{X_n}\left(n\frac{[\omega]\cdot[\chi_s]^{n-1}}{[\chi_s]^n}+\theta_\xi(\chi_s)\right). $$
\subsection{Geometric description of the constant $\la$ in the case $c_k+k<n-1$:}
In this subsection we give a geometric description of the constant $\la$.

Let $ \chi_s \in  b[E_\infty]-s[E_0] $
and $$\mathcal{S}=\left\{s\in(1,b):C_k([\chi_s],[\omega])+ks>\frac{n-1}{s}\right\}. $$ 
Let us look at the constant $C_k([\chi_s],[\omega])+ks$,
$$C_k([\chi_s],[\omega])+ks=n\frac{ab^{n-1}-s^{n-1}}{b^n-s^n}-\frac{nk}{n+1}\frac{b^n+b^{n-1}s+\cdots+s^{n-1}}{b^{n-1}+b^{n-2}s+\cdots+s^{n-1}}+ks. $$
As $s\to b$ the first term in the above sum goes to infinity and the second and third term are bounded for all $s\in (1,b)$, the set $\mathcal{S}$ is nonempty.

We claim that the condition $C_k([\chi_s],[\omega])+ks>\frac{n-1}{s}$ is the condition for the existence of the solution of the modified $J$-equation in the K\"ahler class $[\chi_s]$.
Let $\al_s\in b_1[E_\infty]-[E_0], $ where $b_1=bs^{-1}$ for $s\in \mathcal{S}$ be any K\"ahler metric, $k_1=s^2k$ and $\xi'=k_1w\frac{\d}{\d w}=s^2\xi$. Then $$ C_k([\chi_s],[\omega])+ks>\frac{n-1}{s}\text{ if and only if } C_{k_1}([\al_s],[\omega])+k_1>n-1. $$
By Theorem \ref{critical}, there exist $\hat{\al}_s\in [\al_s]$ such that \[\label{hatalpha} n\frac{\omega\wedge\hat{\al}_s^{n-1}}{\hat{\al}_s^n}=n\frac{[\omega]\cdot[\hat{\al}_s]^{n-1}}{[\hat{\al}_s]^n}+\theta_{\xi'}(\hat{\al}_s). \]
Letting $\hat{\chi}:=s\hat{\al}_s\in b[E_\infty]-s[E_0]$, if we multiply the equation \eqref{hatalpha} by $s^{-1}$ we get $$n\frac{\omega\wedge\hat{\chi}^{n-1}}{\hat{\chi}^n}=n\frac{[\omega]\cdot[\hat{\chi}]^{n-1}}{[\hat{\chi}]^n}+\theta_\xi(\hat{\chi}) ,$$
which proves our claim. In particular by the definition of $\la$ we have  $$ C_k(\la)+k\la=\frac{n-1}{\la} \text{ and }  \la=\inf\mathcal{S}.  $$
Note that since $c_k+k<n-1$ by Corolloary \ref{iffcondition} we also have that $\la>1.$

Moreover, if we take any $s\in (1,\la)$, then by definition of $\la$ we get that $$ C_k([\chi_s],[\omega])+ks< \frac{n-1}{s} $$ and this is equivalent to $$  C_{k_1}([\al_s],[\omega])+k_1<n-1. $$ 
Thus we can not have a smooth solution to the modified $J$-equation in the class $[\chi_s].$

\subsection{Modified $J$-flow with the Calabi ansatz:}
 Recall that the modified $ J $-flow is defined as 
\begin{equation}\label{j flow}
	\begin{cases}
		\frac{\d \vp}{\d t}=c-n\frac{\omega\wedge \chi_\vp^{n-1}}{\chi_\vp^n}+\theta_\xi(\chi_\vp)\\
		\vp_t|_{t=0}=\vp_0 \in \sH^T,
	\end{cases}
\end{equation} 
where $ \theta_\xi(\chi_\vp) $ is the Hamiltonian function of the action of $ \xi $ with respect to the metric $ \chi_\vp $ which is normalized so that $ \int_X\theta_\xi(\chi_\vp)\chi_\vp^n=0 $ and $ c=n\frac{[\omega].[\chi]^{n-1}}{[\chi]^n}. $
Differentiating equation \eqref{j flow} with respect to $t$, we get
$$ \frac{\d}{\d t}\left(\frac{\d \vp}{\d t}\right)=\tilde{\Delta}\left(\frac{\d \vp}{\d t}\right)+\xi\left(\frac{\d \vp}{\d t}\right), $$ where $\tilde{\Delta}f=h^{i\bar{j}}\d_i\d_{\bar{j}}f$ and $h^{i\bar{j}}=\chi^{i\bar{k}}g_{l\bar{k}}\chi^{l\bar{j}}.$
Then the maximum principle implies that $$\min_{X_n}\left.\frac{\d \vp}{\d t}\right|_{t=0} \leq \frac{\d \vp}{\d t}\leq \max_{X_n}\left.\frac{\d \vp}{\d t}\right|_{t=0}. $$
In particular, \[\label{trace upper bound} \La_{\chi_\vp}\omega\leq \max_{X_n}\La_{\chi_0}\omega+\max_{X_n}\theta_\xi(\chi_{\phi})-\min_{X_n}\theta_\xi(\chi_0).  \]
By Corollary 5.3 of \cite{zhu}, $\theta_\xi(\chi_\vp)$ is uniformly bounded. Then the right hand side of the above equation is bounded above by strictly positive constant $A.$
Thus \[\label{eigen.lower bound} \chi_\vp\geq A\omega, \] as long as the flow exists.

Let $ \omega=\ddbar g(s) $ and $ \chi_t=\ddbar f(s,t) $, for $t\geq 0$  satisfy the Calabi Ansatz from the previous section. Then the modified $ J $ flow \eqref{j flow} becomes  \[\label{potflow} \frac{\d f(s,t)}{\d t}=c- (n-1)\frac{g'(s)}{f'(s,t)}-\frac{g''(s)}{f''(s,t)}+kf'(s,t)-\frac{kn}{n+1}\frac{b^{n+1}-1}{b^n-1}. \]

As before, we define strictly increasing functions in the variable $\tau\in [1,b]$ for all $t\geq 0$, that is
$ \psi:[1,b]\times[0,\infty)\to [1,a] $ by $$ \psi(f'(s,t),t)=g'(s). $$

\begin{prop}
	The function $ \psi=\psi(\tau,t) $ defined above is a classical solution of the initial-boundary value problem 
	\begin{equation}\label{flow}\begin{cases}
		\frac{\d \psi}{\d t}=Q(\psi)P[\psi],\\ \psi(1,t)=1,\quad \psi(b,t)=a, \forall t\in (0,\infty) \end{cases}
	\end{equation}
	where 
	\begin{equation}\label{flow}P[\psi] = \psi''(\tau,t)+(n-1)\frac{\psi'(\tau,t)}{\tau}-(n-1)\frac{\psi(\tau,t)}{\tau^2}-k,
	\end{equation} and $ Q\in \textbf{C}^\infty([1,a],[0,\infty)) $ is uniquely determined by $ g $ such that $ Q(x)=0 $ if and only if $ x=1 $ and $ x=b $.
	Moreover, the initial data $ \psi(\tau,0)=\psi_0(\tau) $ is any smooth monotone function satisfying the above boundary conditions. \end{prop}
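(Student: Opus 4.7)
The claim is a change-of-variables identity: the relation $\psi(f'(s,t),t) = g'(s)$ converts the flow for the Calabi potential $f$ into a parabolic PDE for $\psi$ in the moment-map variable $\tau = f'$.

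The first step is to differentiate this defining identity. Differentiating in $t$ gives $\psi_t = -\psi_\tau\, f'_t$, while differentiating once and twice in $s$ yields $g''(s) = \psi_\tau f''$ and $g'''(s) = \psi_{\tau\tau}(f'')^2 + \psi_\tau f'''$. I then take $\partial_s$ of the potential flow \eqref{potflow} to obtain an explicit expression for $f'_t$ in terms of derivatives of $f$ and $g$; substituting the above identities (together with $g' = \psi$ and $f' = \tau$) makes the $f'''$-terms cancel and produces
\[
f'_t \;=\; -f''\!\left(\psi_{\tau\tau} + (n-1)\tfrac{\psi_\tau}{\tau} - (n-1)\tfrac{\psi}{\tau^2} - k\right) \;=\; -f''\, P[\psi].
\]
Combining this with $\psi_t = -\psi_\tau f'_t$ yields $\psi_t = \psi_\tau f''\, P[\psi]$; this is the main algebraic content, and the cancellations are mechanical once the substitutions are in place.

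Next, I note that $\psi_\tau f'' = g''(s)$ is intrinsically a function of $g'(s) = \psi$, because by the Calabi ansatz $g'$ is a smooth strictly increasing diffeomorphism of $\RR$ onto $(1,a)$. Setting $Q(x) := g''\bigl((g')^{-1}(x)\bigr)$ on $(1,a)$ gives $\psi_t = Q(\psi)\, P[\psi]$ on the interior, with $Q \geq 0$ from convexity of $g$. The delicate point is smooth extension of $Q$ to $[1,a]$ with zeros exactly at the endpoints. For this I use the Calabi boundary expansions for $g$: writing $u = e^s$ near $s = -\infty$ and $F_0(u) := g(\log u) - \log u$ (which by the ansatz extends smoothly to $u=0$ with $F_0'(0) > 0$), one computes $g'(s) - 1 = u\,F_0'(u)$ and $g''(s) = u\bigl(u\,F_0''(u) + F_0'(u)\bigr)$; the non-degeneracy $F_0'(0) > 0$ lets one invert $x = 1 + u\,F_0'(u)$ smoothly in $u$ and read off that $Q$ extends smoothly across $x = 1$ with $Q(1) = 0$ and $Q'(1) > 0$. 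The analogous expansion $F_\infty$ handles the other endpoint.

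Finally, the boundary conditions $\psi(1,t) = 1$ and $\psi(b,t) = a$ are read off from the asymptotics at $s = \pm\infty$, and they persist along the flow because $f(\cdot,t) - f(\cdot,0)$ is a smooth $T$-invariant K\"ahler potential on $X_n$, so $f(\cdot,t)$ continues to satisfy the Calabi-ansatz end behavior for all $t$. Smoothness and strict monotonicity of $\psi$ in $\tau$ follow from $f''(s,t) > 0$ and the implicit function theorem applied to $\tau = f'(s,t)$. The main non-mechanical point is the endpoint regularity of $Q$, for which the ansatz non-degeneracy at $s = \pm\infty$ is essential; the rest is either direct substitution or immediate from the setup.
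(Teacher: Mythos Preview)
Your proposal is correct and follows essentially the same route as the paper: differentiate the defining relation $\psi(f'(s,t),t)=g'(s)$ in $t$, differentiate the potential flow \eqref{potflow} in $s$, substitute using $g''=\psi_\tau f''$ and $\tau=f'$, and identify the coefficient $\psi_\tau f'' = g''(s)$ as $Q(\psi)$ via the inverse of $g'$. Your treatment of the endpoint regularity of $Q$ (using the expansions $F_0,F_\infty$ to show $Q$ extends smoothly to $[1,a]$ with simple zeros at the endpoints) is in fact more careful than the paper's, which simply asserts that the asymptotics of $g$ give $Q=0$ at the endpoints and $Q>0$ in the interior.
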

\begin{proof}
	We have the equation $ \psi(f'(s,t))=g'(s). $ Take derivative with respect to the variable $ t. $
	\[\label{flow1} \frac{\d \psi}{\d \tau}\frac{\d f'}{\d t}+\frac{\d \psi}{\d t}=0  \]
	And if we take derivative of the equation \eqref{potflow} with respect to the variable $ s $, we get
	$$ \frac{\d f'}{\d t}=-(n-1)\left(\frac{g'}{f'}\right)'-\left(\frac{g''}{f''}\right)'+kf''(s,t) $$
	Using this in equation \eqref{flow1}, we get
	\begin{align*}
		\frac{\d \psi}{\d t}&=\psi'(f'(s,t))\left[(n-1)\left(f''\frac{\psi'(\tau)}{f'}-\frac{\psi(\tau)}{f'^2}f''\right)+\psi''(\tau)f''-kf''\right] \\
		&=\psi'(f'(s,t))f''(s,t)\left[(n-1)\left(\frac{\psi'(\tau)}{\tau}-\frac{\psi(\tau)}{\tau^2}\right)+\psi''(\tau)-k\right] \\
		&=g''(s)\left(\psi''(\tau)+(n-1)\frac{\psi'(\tau)}{\tau}-(n-1)\frac{\psi(\tau)}{\tau^2}-k\right)
	\end{align*}
	Since $ g $ is convex and $ g'>0 $, we can get that $ g''(s)=g''((g')^{-1}(\psi)) $.
	Let $ Q(\psi)=g''((g')^{-1}(\psi)). $
	Then by the asymptotic properties of $ g $, we can get that $ Q(1)=Q(b)=0 $ and $ Q(\tau)>0 $, for all $ \tau \in (1,b). $

\end{proof}

\section{Convergence of the flow with the special initial data}
		In this section we prove a weaker version of our main theorem for a special choice of initial condition.
		For the most part, our proof runs parallel to the arguments in \cite{fanglai} and \cite{vrj}. There is
		however one key point of difference that we now wish to explain. Recall that the proof of \cite{fanglai} and \cite{vrj} also proceeds by first proving convergence for a special initial K\"ahler form, namely
		the K\"ahler form for which the function $\psi$ is linear. With this choice of initial data one can then prove a comparison principle along the flow and a monotonicity property along the flow. Unfortunately,  as we explain below (cf. Remark \ref{counter example}), for the modified $J$-flow, the choice of a linear initial function does not quite work. We instead have to choose another concave, strictly increasing function. The next lemma summarises the key properties that we need from the initial function.  
		
\begin{lem}
	\label{initial}For $c_k+k\leq n-1$, there exists a smooth, strictly monotonic and concave function $\psi_0:[1,b]\rightarrow [1,a]$ with the following properties: 
	\begin{enumerate}
		\item  $\psi_0 \geq \sigma $, where we define
		\[\sigma(\tau)=\begin{cases}
			1,&1\leq \tau\leq \la;\\
			\psi_\la(\tau),&\la\leq \tau\leq b,
		\end{cases} \text{ and $\la\in(1,b)$ satisfies } C_k(\la)+k\la=\frac{n-1}{\la}.\]
		\item $P[\psi_0]\leq 0$, where $P$ is the $2^{nd}$ order linear differential operator given by \eqref{flow}.	\end{enumerate}
\end{lem}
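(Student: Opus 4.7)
The plan is to find $\psi_0$ via the ansatz
$$\psi_0(\tau) = A\tau + B\tau^{-(n-1)} + C,$$
with real parameters $A, B, C$. The motivation is that $\tau$ and $\tau^{-(n-1)}$ together span the kernel of the linear operator $L[u] := u'' + (n-1)u'/\tau - (n-1)u/\tau^2$, so that $P[\psi_0] = L[C] - k = -(n-1)C/\tau^2 - k$; combined with the computation $\psi_0''(\tau) = n(n-1)B\tau^{-(n+1)}$, this means concavity is equivalent to $B \leq 0$ and $P[\psi_0] \leq 0$ is equivalent to $C \geq 0$. The boundary conditions $\psi_0(1) = 1$, $\psi_0(b) = a$ leave a one-parameter family, and I would use the free parameter to force $B \leq 0$ and $C \geq 0$ simultaneously.

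Explicitly, the boundary conditions yield
$$C = \frac{b-a}{b-1} - \frac{B(b^n-1)}{b^{n-1}(b-1)}, \qquad A = 1 - B - C.$$
When $a \leq b$, set $B = 0$ and obtain the linear interpolant with $C = (b-a)/(b-1) \geq 0$ and $A = (a-1)/(b-1) > 0$. When $a > b$, set $C = 0$ and obtain $B = b^{n-1}(b-a)/(b^n-1) < 0$ and $A = (ab^{n-1}-1)/(b^n-1) > 0$. In both regimes, strict monotonicity follows from $\psi_0'(\tau) = A + (n-1)|B|\tau^{-n} > 0$, and concavity together with $P[\psi_0] \leq -k \leq 0$ hold by construction.

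The remaining task is to verify $\psi_0 \geq \sigma$. On $[1,\la]$ this is immediate, since $\psi_0$ is increasing and $\psi_0(1) = 1$, hence $\psi_0 \geq 1 = \sigma$. On $[\la,b]$, I would set $w := \psi_0 - \psi_\la$; because the $-k$ terms in $P[\psi_0]$ and $P[\psi_\la]$ cancel, one has $L[w] = P[\psi_0] - P[\psi_\la] = P[\psi_0] \leq 0$, with boundary data $w(\la) = \psi_0(\la) - 1 \geq 0$ and $w(b) = 0$. Since the zeroth-order coefficient of $L$ is $-(n-1)/\tau^2 \leq 0$, the weak maximum principle gives $w \geq 0$ on $[\la,b]$, i.e., $\psi_0 \geq \psi_\la = \sigma$ there.

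The main obstacle, and the reason the linear choice used in \cite{fanglai} cannot simply be transplanted, is visible from $P[m\tau + c] = -(n-1)c/\tau^2 - k$, which can be strictly positive near $\tau = 1$ whenever $a > b$ and $k$ is small. The role of the extra $B\tau^{-(n-1)}$ term is therefore to inject the required concavity without disturbing $P$, precisely because $\tau^{-(n-1)} \in \ker L$. This is the one new ingredient needed to extend the Fang--Lai strategy to the modified $J$-flow.
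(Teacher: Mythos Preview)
Your proof is correct and takes a genuinely different route from the paper. The paper picks a single explicit function
\[
\psi_0(\tau)=\frac{a-1}{b^{-n}-1}\bigl(\tau^{-n}-1\bigr)+1,
\]
checks $P[\psi_0]<0$ by direct computation, and obtains $\psi_0\geq\sigma$ by observing that $\sigma$ is convex on $[1,b]$ (since $\psi_\la'(\la)=0$ and $\psi_\la$ is convex on $[\la,b]$), so the concave $\psi_0$ lies above the convex $\sigma$ between matching endpoints. Your construction instead builds $\psi_0$ from the kernel of $L$, so that $P[\psi_0]=-(n-1)C/\tau^2-k$ is governed by a single scalar, at the price of a case split on the sign of $b-a$; your comparison $\psi_0\geq\psi_\la$ on $[\la,b]$ then replaces the convexity observation by a direct maximum-principle argument for $L$ (which is legitimate here since the zeroth-order coefficient $-(n-1)/\tau^2$ is strictly negative). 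Both arguments are short: the paper's is more uniform (one formula, no cases), while yours gives a cleaner structural explanation for $P[\psi_0]\leq 0$ and avoids having to check that $\sigma$ is globally convex. Incidentally, each comparison step could be swapped into the other proof: your maximum-principle argument works for the paper's $\psi_0$ as well, and the paper's convex-below-concave observation applies equally to your $\psi_0$.
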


\begin{proof}
	We define $\psi_0$ an increasing, smooth and concave function by $$ \psi_0(\tau)=\frac{a-1}{b^{-n}-1}(\tau^{-n}-1)+1. $$		
	
	We know that the $\la$ satisfies the equation $C_k(\la)+k\la=\frac{n-1}{\la}$ this implies that $$ \frac{n+1-k\la^2}{n+1}-\la\frac{C_k(\la)}{n}>0,$$ therefore $ \sigma $ is convex on $ [1,b]. $ Since $\psi_0$ is concave we have $$ 1 \leq \sigma(\tau) < \psi_0(\tau) < a, \text{ for all } \tau \in (1,b). $$
	Next, we compute 
	\begin{align*}
		P[\psi_0]&=\frac{a-1}{b^{-n}-1}n(n+1)\frac{1}{\tau^{n+2}}-(n-1)\frac{a-1}{b^{-n}-1}\frac{n}{\tau^{n+2}}-(n-1) \frac{\psi_0(\tau)}{\tau^2}-k\\
		&=-\frac{2n(a-1)}{1-b^{-n}}\frac{1}{\tau^{n+2}}-(n-1) \frac{\psi_0(\tau)}{\tau^2}-k\\
		&< 0
	\end{align*}
\end{proof}
\begin{prop}\label{funflow}
	Let $0<c_k+k\leq n-1$ and suppose $\psi(\tau,t)$ be the solution of the equation \eqref{flow} with the initial value $\psi_0(\tau)=\psi(\tau,0)$, constructed in the previous lemma, then
		\[ 	\psi(\tau,t)\xrightarrow[C_{loc}^1((\la,b))]{C^0}\sigma(\tau)=
			\begin{cases}
				1, & 1\leq \tau \leq \la \\
				\psi_\la(\tau), & \la \leq \tau \leq b
			\end{cases} , \]		
		where $ \psi_\la $ is a solution of the ODE \eqref{ode} with $ \psi_\la(\la)=1 $ and $ \psi_\la(b)=a $ and $ \la $ can be described as $$ \la=\inf\{s: \text{ there exist } \psi_s \text{ satisfies } \eqref{ode}, \psi_s(s)=1, \psi_s(b)=a \text{ and }\psi_s'(\tau) > 0,\text{ for all }\tau\in(s,b) \}. $$ 
\end{prop}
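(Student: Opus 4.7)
The plan is to adapt the Fang-Lai strategy, now with the linear initial function replaced by the $\psi_0$ constructed in Lemma \ref{initial}. The argument I would carry out proceeds in four steps: monotonicity of $\psi$ in $t$, a lower barrier $\psi \geq \sigma$, passage to the limit, and identification of the limit.

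First, I would establish monotonicity in $t$. Differentiating \eqref{flow} in $t$, $u := \partial_t \psi$ satisfies the linear parabolic equation
\[
u_t = Q(\psi)\left(u'' + (n-1)\frac{u'}{\tau} - (n-1)\frac{u}{\tau^2}\right) + Q'(\psi)\, P[\psi]\, u,
\]
with $u(1,t) = u(b,t) = 0$ since $\psi$ has time-independent Dirichlet data. At $t=0$, $u(\cdot, 0) = Q(\psi_0)\,P[\psi_0] \leq 0$ by Lemma \ref{initial}, so the parabolic maximum principle gives $\partial_t \psi \leq 0$ for all $t$; a standard max/min argument on $\psi$ itself gives $1 \leq \psi \leq a$. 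The monotone pointwise limit $\psi_\infty(\tau) := \lim_{t\to\infty}\psi(\tau,t)$ therefore exists and satisfies $1 \leq \psi_\infty \leq \psi_0$.

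Next I would prove the lower barrier $\psi \geq \sigma$. On $[1,\lambda]$ this is automatic from $\psi \geq 1 = \sigma$. On $[\lambda, b]$, setting $w := \psi - \sigma$ and using $P[\sigma] = P[\psi_\lambda] = 0$, $w$ satisfies the linear degenerate parabolic equation
\[
w_t = Q(\psi)\left(w'' + (n-1)\frac{w'}{\tau} - (n-1)\frac{w}{\tau^2}\right),
\]
with parabolic boundary data $w(\lambda, t) \geq 0$, $w(b,t) = 0$, and $w(\tau,0) > 0$ on $(\lambda, b)$ by the strict inequality $\psi_0 > \sigma$ from Lemma \ref{initial}. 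The parabolic maximum principle (for the coefficient $Q(\psi) \geq 0$) then gives $w \geq 0$, hence $\psi_\infty \geq \sigma$. Interior Schauder estimates for \eqref{flow} on compact subsets of $\{\psi_\infty > 1\}$, where the equation is uniformly parabolic since $\psi$ stays bounded away from $1$ and from $a$, would upgrade convergence to $C^\infty_{\text{loc}}$ there; together with $\partial_t \psi \to 0$ in the same sense, this gives $P[\psi_\infty] = 0$ on $\{\psi_\infty > 1\}$.

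Finally, to identify the limit, I would set $\tau_* := \sup\{\tau \in [1,b] : \psi_\infty(\tau) = 1\}$. On $(\tau_*, b]$, ODE uniqueness forces $\psi_\infty = \psi_{\tau_*}$. A direct computation from \eqref{psi tilde} and \eqref{lam} yields
\[
\psi_s'(s) = C_k(s) + ks - \frac{n-1}{s},
\]
which vanishes exactly at $s = \lambda$ by the defining equation for $\lambda$, is negative for $s < \lambda$, and positive for $s > \lambda$. If $\tau_* < \lambda$, then $\psi_{\tau_*}'(\tau_*) < 0$ forces $\psi_\infty < 1$ just to the right of $\tau_*$, contradicting $\psi_\infty \geq 1$; if $\tau_* > \lambda$, the lower barrier gives $\psi_\infty(\lambda + \epsilon) \geq \psi_\lambda(\lambda + \epsilon) > 1$ for small $\epsilon > 0$, contradicting $\psi_\infty \equiv 1$ on $[1, \tau_*]$. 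Hence $\tau_* = \lambda$ and $\psi_\infty = \sigma$. Uniform $C^0$ convergence on $[1,b]$ then follows from Dini's theorem, while the smooth convergence on compact subsets of $(\lambda, b]$ noted above gives the $C^1_{\text{loc}}((\lambda, b))$ claim. The main obstacle relative to the $J$-flow argument of \cite{fanglai} is the $-k$ term in $P$, which rules out a linear $\psi_0$ satisfying $P[\psi_0] \leq 0$; Lemma \ref{initial} provides the required alternative, after which the monotonicity-and-comparison scheme adapts cleanly.
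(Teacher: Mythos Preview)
Your plan matches the paper's: both proofs run the Fang--Lai scheme (lower barrier $\psi\ge\sigma$, monotonicity $\partial_t\psi\le 0$, passage to the limit, identification $\psi_\infty=\sigma$), with Lemma~\ref{initial} supplying the concave $\psi_0$ that replaces the linear one. The order of the first two steps is swapped, but the maximum-principle arguments are the same.

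The one substantive difference is how you obtain $C^1_{\mathrm{loc}}$ convergence and the ODE for $\psi_\infty$. You invoke interior Schauder estimates on compact sets in $(\lambda,b)$, using $\sigma\le\psi\le\psi_0$ to keep $Q(\psi)$ uniformly away from $0$ and $\infty$. That works, but since $Q(\psi)$ depends on the solution you need a Krylov--Safonov/De Giorgi--Nash step first to get H\"older regularity of the coefficient, then bootstrap; ``Schauder'' alone is not quite enough. The paper instead exploits the specific structure: setting $\eta(\tau,t)=\psi'+(n-1)\psi/\tau-k\tau$ one has $\partial_t\psi=Q(\psi)\eta'$, so $\eta(\cdot,t)$ is monotone in $\tau$ for each $t$; an $L^1$-in-time estimate then forces $\eta\to\eta_\infty$ (a constant) along a subsequence, giving the first-order ODE $\psi_\infty'+(n-1)\psi_\infty/\tau-k\tau=\eta_\infty$ directly. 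Your route is more general-purpose and yields $C^\infty_{\mathrm{loc}}$; the paper's is more elementary and self-contained.

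One small gap to patch in your identification step: you assert that $\psi_s'(s)=C_k(s)+ks-(n-1)/s$ is negative for $s<\lambda$ and positive for $s>\lambda$, but this sign pattern is not immediate from the single equation $C_k(\lambda)+k\lambda=(n-1)/\lambda$; it needs a monotonicity argument for $s\mapsto C_k(s)+ks-(n-1)/s$. The paper sidesteps this by arguing instead that $\psi_\infty\ge\sigma$ forces $\tau_*\le\lambda$, while $\psi_\infty=\psi_{\tau_*}$ nondecreasing on $[\tau_*,b]$ places $\tau_*$ in the closure of $\mathcal S$, hence $\tau_*\ge\lambda$.
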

\begin{proof}		
	The proof consists of several steps.
		\begin{itemize}
		\item First we claim that $\psi(\tau,t)\geq \sigma(\tau).$
		Define $ h(\tau,t)=\psi(\tau,t)-\sigma(\tau) $ and it satisfies the following equation on $[\la,b]$ \[\label{heqn1} \frac{\d h}{\d t}=Q(\psi)\left(h''(\tau,t)+(n-1)\frac{h'(\tau,t)}{\tau}-(n-1)\frac{h(\tau,t)}{\tau^2}\right). \]
		Since $\psi(\tau,t)$ is an increasing function for each $t$, $\psi(\tau,t)>1$, for all $(\tau,t)\in (1,b]\times[0,\infty),$ hence the claim is equivalent to proving that $$ \inf_{[\la,b]\times[0,T]} h(\tau,t) \geq0, \text{ for all } T>0.$$
		Suppose the claim is not true, that is, there exist a $T>0$ such that $\inf_{[\la,b]\times[0,T]}h(\tau,t)<0.$ Let $h(x,t_0)=\inf_{[\la,b]\times[0,T]}h(\tau,t), $ for some $(x,t_0)\in [\la,b]\times[0,T].$ 
		
		Since $h(\tau,0)=\psi_0(\tau)-\sigma(\tau)\geq 0$ (by Lemma \ref{initial}), $h(b,t)=0$ and $h(\la,t)>0$ we can see that $t_0>0$ and $x\in (\la,b).$ And we have $$Q(\psi(x,t_0))>0, \frac{\d h}{\d t}(x,t_0)\leq0, h'(x,t_0)=0 \text{ and } h''(x,t_0)\geq 0 .$$
		Hence, at $(x,t_0)$ we have
		\begin{align*}
			\Bigg(\frac{\d h}{\d t}-Q(\psi)\left(h''+(n-1)\frac{h'}{\tau}-(n-1)\frac{h}{\tau^2}\right)\Bigg)(x,t_0)&\leq (n-1)Q(\psi(x,t_0))\frac{h(x,t_0)}{x^2}\\
			&<0
		\end{align*}
		which is a contradiction to the equation \eqref{heqn1} satisfied by $h$ on $[\la,b].$
		Hence we have $$ \psi(\tau,t)\geq\sigma(\tau), \text{ for all } \tau \in [1,b]. $$
		\item Next we claim that $\psi(\tau,t)$ is decreasing in $t.$ Let $H(\tau,t)=\frac{\d \psi}{\d t}(\tau,t)$.
		Firstly note that by our choice of $\psi_0$ we have that by Lemma \ref{initial} $$H(\tau,0) = Q(\psi_0)P[\psi_0] \leq 0.$$The claim is equivalent to proving that $$\sup_{[1,b]\times[0,T]}H(\tau,t)\leq 0, \text{ for all } T>0.$$	
		Suppose this is not true, then there exist a $T>0$ such that $\sup_{[1,b]\times[0,T]}H(\tau,t)>0.$		
		Let $H(x,t_0)=\sup_{[1,b]\times[0,T]}H(\tau,t)$ and consider $$F(\tau,t)=e^{-Ct}H(\tau,t),\text{ where }C=\sup_{[1,b]\times[0,T]}Q'(\psi(\tau,t))\frac{H(\tau,t)}{Q(\psi(\tau,t))}.$$
		Since $$\frac{H(\tau,t)}{Q(\psi(\tau,t))}=\psi''(\tau,t)+(n-1)\frac{\psi'(\tau,t)}{\tau}-(n-1)\frac{\psi(\tau,t)}{\tau^2}-k,$$ the constant $C$ is finite.
		
		Let $F(y,t_1)=\sup_{[1,b]\times[0,T]}F(\tau,t).$ Since $F(\tau,0)=H(\tau,0)\leq0$ and $ F(1,t)=F(b,t)=0 $ it should be the case that $ t_1>0$ and $y\in (1,b)$. Then at $(y,t_1)$ we have \begin{gather*}
			F(y,t_1)>0,Q(\psi(y,t_1))>0,H(y,t_1)>0, \frac{\d F}{\d t}(y,t_1)\geq 0,\\ F'(y,t_1)=H'(y,t_1)=0,F''(y,t_1)\leq 0 \text{ and } H''(y,t_1)\leq 0.
		\end{gather*}
		This implies that 
		$$	0\leq \Big(\frac{\d F}{\d t}-Q(\psi)F''\Big)(y,t_1)=e^{-Ct_1}\left(-CH(y,t_1)+\frac{\d H}{\d t}(y,t_1)-Q(\psi(y,t_1))H''(y,t_1)\right)$$
		so
		\begin{multline*}
			0\leq -e^{-Ct_1}\Bigg(H(y,t_1)^2\frac{Q'(\psi)}{Q(\psi)}-\frac{\d H}{\d t}(y,t_1)\\+Q(\psi)\Big(H''(y,t_1)+(n-1)\frac{H'(y,t_1)}{y}-(n-1)\frac{H(y,t_1)}{y^2}\Big)+(n-1)Q(\psi)\frac{H(y,t_1)}{y^2}\Bigg)
		\end{multline*}
		Thus
		\begin{multline*}
			H(y,t_1)^2\frac{Q'(\psi)}{Q(\psi)}-\frac{\d H}{\d t}(y,t_1)+Q(\psi)\Big(H''(y,t_1)\\+(n-1)\frac{H'(y,t_1)}{y}-(n-1)\frac{H(y,t_1)}{y^2}\Big)\leq-(n-1)Q(\psi)\frac{H(y,t_1)}{y^2}<0
		\end{multline*}
		which is a contradiction to the fact that $H$ actually satisfies the following equation $$\frac{\d H}{\d t}(\tau,t)=H(\tau,t)^2\frac{Q'(\psi)}{Q(\psi)}+Q(\psi)\Big(H''(\tau,t)+(n-1)\frac{H'(\tau,t)}{\tau}-(n-1)\frac{H(\tau,t)}{\tau^2}\Big). $$
		Hence $$ \frac{\d \psi}{\d t} \leq 0,$$
		and $ \psi(\tau,t) $ is a decreasing function in $t$.
		\item Now we prove the $C^0$ convergence. The proof is very similar to that of \cite{vrj}, so we give only sketch of the proof and for the details we refer the reader to \cite{vrj}. Since $\La_{\chi_{\vp(t)}}\omega=\psi'(\tau,t)+(n-1)\frac{\psi(\tau,t)}{\tau}$ and by the inequality \eqref{trace upper bound} we get that $0<\psi'(\tau,t)< C$ for some positive constant C, together this with the previous claim implies that $\psi(\tau,t)$ converges uniformly on $[1,b]$ to its pointwise limit $\psi_\infty(\tau)$, an increasing continuous function.
		
		\item Next, we prove the $C^1_{loc}$ convergence of $\psi(\tau,t)$ and differentiability of $\psi_\infty$ on $(s,b)$, where $$s=\sup\{x:\psi_\infty(x)=1\}.$$ Since $\psi(\tau,t)\geq \sigma(\tau),$ we get $\psi_\infty(\tau)\geq \sigma(\tau)$.
		Next, as in \cite{vrj}, we consider the evolving point-wise slope $$\eta(\tau,t):=\psi'(\tau,t)+(n-1)\frac{\psi(\tau,t)}{\tau}-k\tau.$$
		Then $$ \frac{\d \psi}{\d t}(\tau,t)=Q(\psi)\eta'(\tau,t). $$ So for each $t$, $\eta(\tau,t)$ is decreasing on $[1,b].$ Let us fix $0<\delta<<1$ and $T>0$ . Since $\psi(\tau,t)$ converges uniformly to $\psi_\infty$ we get $Q(\psi(\tau,t))$ is uniformly bounded below, for all $(\tau,t)\in [s+\delta,b-\delta]\times[T,\infty) $, and this implies  \[ \int_{T}^{\infty}\int_{s+\delta}^{b-\delta}\left|\frac{\d \psi}{\d t}\right|d\tau dt=\int_{s+\delta}^{b-\delta}\Big(\psi(\tau,T)-\psi(\tau,\infty)\Big)d\tau \leq D \]  for some constant $D>0$ independent of $\delta.$ This implies that \[ \int_{T}^{\infty}\Big(\eta(1+\delta,t)-\eta(b-\delta,t)\Big)dt=-\int_{T}^{\infty}\int_{s+\delta}^{b-\delta}\eta'(\tau,t)d\tau dt\leq 
		C(\delta), \] for some $C(\delta)>0$.
		Therefore, there exist a sequence $\{t_j\}$ such that $\eta(\tau,t_j)$ converges to $\eta_\infty$ and by the monotinicity of $\eta$ the limit $\eta_\infty$ will be a constant. And the uniform convergence of $\psi(\tau,t)$ gives the uniform convergence of $\psi'(\tau,t_j)$ on $(s,b).$
		
		Now, let $x,\tau\in (s,b)$, then $$\eta_\infty(\tau-x)=\lim_{j\to \infty}\int_{x}^{\tau}\eta(y,t_j)dy=\psi_\infty(\tau)-\psi_\infty(x)+\int_{x}^{\tau}\Big((n-1)\frac{\psi_\infty(y)}{y}-ky\Big) dy. $$ Hence $\psi_\infty$ is differentiable on $(s,b)$ and it satisfies the following equation $$\psi_\infty'(\tau)+(n-1)\frac{\psi_\infty(\tau)}{\tau}-k\tau=\eta_\infty .$$
		 
		\item  Next we claim that $\lim_{t \to \infty}\psi(\tau,t)=\psi_\infty(\tau)=\sigma(\tau).$
		Since $ \psi_\infty(\tau)\geq \sigma(\tau) $
		and $ 1 \leq \sigma(\tau) < \psi_0(\tau) < a, \text{ for all } \tau \in (1,b) $ the function $ \psi_\infty $ should be of the following form
		$$ \psi_\infty(\tau)=
		\begin{cases}
			1,&1\leq \tau\leq s;\\
			\rho(\tau),&s\leq\tau\leq  b,
		\end{cases} $$
		where $ s\in [1,b) $ and $ \rho $ is a solution of the ODE \eqref{ode} in the interval $ [s,b] $ with the boundary condition $ \rho(s)=1 \text{ and }\rho(b)=a $ and $ \rho'(\tau)>0, \text{ for all }\tau\in (s,b) $ which follows from the last claim and the fact that $\psi_\infty$ is increasing. This implies that $ s\leq \la. $
		From the definition of $ \la $ we can get that $ s=\la. $
		And the uniqueness of the solution of the ODE \eqref{ode} on the interval $[\la,b]$ implies that $$ \psi_\infty(\tau)= \sigma(\tau). $$ Therefore $ \psi(\tau,t) $ converges in $C^1_{loc}((\la,b))$ to its limiting function $ \sigma(\tau)=\lim_{t \to \infty}\psi(\tau,t) $, follows from the fact that $\psi_\infty'(\tau)>0$ on any compact $K\subset(\la,b).$
	\end{itemize}
	Finally, since $\psi_\infty$ satisfies the ODE \eqref{ode} on $(\la,b)$, with the boundary conditions $\psi_\infty(\la)=1$ and $\psi_\infty(b)=a$, we get $\eta_\infty=C_k(\la).$ 
\end{proof}
\begin{rem}\label{counter example}
	We now explain why the straight line path does not work for the general
	modified $J$ flow. The main issue that the sign of $P[\psi_0]$ could be indeterminate, which is a key starting point in establishing the monotonicity of $\psi(\tau,t)$ in the time variable. For instance,
	consider the following example on $X_2$, let $a=12, b=6$ and $k=1$, then in this case $0<c_k+k<1$ and for the straight line path $\psi_0$ we have $P[\psi_0](1)>0$ and $P[\psi_0](6)<0.$ 
\end{rem}
\begin{rem}
	For the case $c_k+k>n-1,$ the solution of the ODE \eqref{ode}, $\tilde{\psi}$ can be neither convex nor concave. For example, on $X_3$, if we take $a=4,b=2$ and $k=1$, then the solution $\tilde{\psi}$ is neither convex nor concave. And we can not choose the straight line path as  a intial value of the flow, as $\tilde{\psi}$ neither lies above nor lies below the straight line path completely. So, we choose $\psi_0(\tau)=\tilde{\psi}+\frac{(\tau-1)(b-\tau)}{N\tau^n},$ for some sufficiently large $N,$ as the initial data of the flow, then $\psi_0$ satisfies all the properties mentioned in Lemma \ref{initial} and then we can proceed as in the last propostion to prove the convergence in the case of $c_k+k>n-1.$
\end{rem}
\section{Uniqueness and higher order estimates}
Uniqueness and higher order estimates for the $J$-flow with the Calabi ansatz in the unstable case is done in \cite{vrj}. In this section we derive the uniform higher order estimates and prove the uniqueness of the limit of the modified $J$-flow for the cases $c_k+k\leq n-1,$ for this we need some results which are very similar to the results proved in \cite{vrj} with necessary modifications. So, just for the readers convenience we state those results and give sketch of the proof for some of the results which needed the necessary modifications.

In the following proposition we prove that on a given compact subset $K \subset \RR$ the normalized ansatz K\"ahler potentials actually converge. Let $$ \Phi:[1,a]\times[0,\infty)\to [1,b],\text{ defined by } \Phi(\tau,t):=\psi^{-1}(\tau,t). $$ That is $f'(s,t)=\Phi(g'(s),t)$ and we define $\Phi_\infty:=(\psi_\infty|_{[\la,b]})^{-1}:[1,a]\to[\la,b]. $
\begin{prop}
	\label{potential convergence}For any compact $K\subset\RR$, $$f'(s,t)\to (\Phi_\infty\circ g')(s), $$ increasingly in $L^\infty(K)$ as $t\to \infty.$
\end{prop}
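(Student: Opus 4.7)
The plan is to work directly with the inverse function $\Phi(\tau,t)=\psi(\tau,t)^{-1}$, exploiting the fact that $f'(s,t)=\Phi(g'(s),t)$ together with the uniform convergence $\psi(\cdot,t)\to \psi_\infty$ on $[1,b]$ established in the previous proposition. Because $g'\colon \RR\to(1,a)$ is a smooth strictly increasing surjection (taking values in the open interval by the Calabi ansatz), the image $g'(K)$ of any compact $K\subset\RR$ is a compact subset $[\alpha,\beta]\subset (1,a)$. It therefore suffices to show that $\Phi(\cdot,t)\to \Phi_\infty$ monotonically from below and uniformly on every such $[\alpha,\beta]$.

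For the monotonicity, I would differentiate the identity $\psi(\Phi(\tau,t),t)=\tau$ in $t$ to obtain
\[
\Phi_t(\tau,t)=-\frac{\psi_t(\Phi(\tau,t),t)}{\psi_\tau(\Phi(\tau,t),t)}.
\]
Since $\psi$ is strictly increasing in $\tau$ and nonincreasing in $t$ (both established in the previous proposition), the right-hand side is nonnegative, so $\Phi(\tau,t)$, and hence $f'(s,t)=\Phi(g'(s),t)$, is nondecreasing in $t$ for each fixed $s$.

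For the uniform convergence, the key observation is that although $\psi_\infty$ is constant on $[1,\lambda]$, its restriction $\psi_\infty|_{[\lambda,b]}=\psi_\lambda$ is smooth with $\psi_\lambda'>0$ on $(\lambda,b]$. Set $y_1=\Phi_\infty(\alpha)$ and $y_2=\Phi_\infty(\beta)$, so $[y_1,y_2]\subset (\lambda,b)$. Choose $\epsilon_0>0$ so that $[y_1-\epsilon_0,y_2+\epsilon_0]\subset (\lambda,b)$, and set $\mu=\min_{[y_1-\epsilon_0,y_2+\epsilon_0]}\psi_\lambda'>0$. Then for $\epsilon\in(0,\epsilon_0)$ and every $\tau\in[\alpha,\beta]$, strict monotonicity of $\psi_\lambda$ and the mean value theorem give
\[
\psi_\infty(\Phi_\infty(\tau)+\epsilon)-\tau\geq \mu\epsilon,\qquad \tau-\psi_\infty(\Phi_\infty(\tau)-\epsilon)\geq \mu\epsilon.
\]
Once $t$ is large enough that $\|\psi(\cdot,t)-\psi_\infty\|_{L^\infty[1,b]}<\mu\epsilon/2$, the strict monotonicity of $\psi(\cdot,t)$ pinches $\Phi(\tau,t)\in(\Phi_\infty(\tau)-\epsilon,\Phi_\infty(\tau)+\epsilon)$ uniformly in $\tau\in[\alpha,\beta]$, yielding the desired uniform convergence.

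The main subtlety, and the reason we restrict to compact $K\subset\RR$, is precisely that $g'(K)$ stays strictly inside $(1,a)$, so the corresponding target values $\Phi_\infty(g'(s))\in [y_1,y_2]$ are bounded away from the endpoint $\lambda$ where $\psi_\lambda'$ vanishes and $\Phi_\infty$ is singular. Without such a separation (for instance, over all of $\RR$), inverting the uniformly convergent family $\psi(\cdot,t)$ would lose the uniform positive lower bound on the slope and one could at best hope for pointwise or $L^1_{\mathrm{loc}}$-type convergence.
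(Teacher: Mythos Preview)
Your argument is correct and follows essentially the same approach as the paper: reduce to uniform monotone convergence of $\Phi(\cdot,t)$ on compact subsets of $(1,a)$, obtain the monotonicity in $t$ from that of $\psi$, and invert the uniform convergence of $\psi$ using a positive lower bound on the slope away from $\tau=\lambda$. The only cosmetic difference is that the paper invokes the $C^1_{\mathrm{loc}}((\lambda,b))$-convergence of $\psi$ from the preceding proposition to bound $\psi'(\tau,t)$ directly on a compact $B\subset(\lambda,b)$, whereas you use only the $C^0$-convergence together with $\psi_\lambda'>0$ on compact subsets of $(\lambda,b)$; the substance is identical.
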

\begin{proof}
	For a given compact subset $K\subset\RR$ we can find a $B\subset \subset (\la,b)$ such that $K=(\Phi_\infty\circ g')^{-1}(B)$, and we have $\psi_\infty'(\tau)>0, $ for all $\tau\in B.$ Then by Proposition \ref{funflow}, there exists $C,T>0$ such that for any $t>T$ and $\tau\in B$, we have $$C^{-1}<\psi'(\tau,t)<C. $$
	Since $\psi$ decreases in $t$ and by definition of $\Phi$ we get that $\Phi(.,t)$ converges uniformly to $\Phi_\infty$ on $\Phi_\infty^{-1}(B)$ increasingly in $t$. Thus, for any compact $K\subset\RR$, $f'(s,t)$ converges increasingly to $(\Phi_\infty\circ g')(s)$, for $s\in K.$
\end{proof}
We define the normalized ansatz potentials by $$\hat{f}(s,t):=f(s,t)-f(0,t)=\int_{0}^{s}f'(r,t)dr$$ and $$f_\infty(s):=\int_{0}^{s}(\Phi_\infty\circ g')(r)dr.$$
By the above proposition we get that $\hat{f}(s,t)$ converges uniformly to $f_\infty(s)$ on any compact set $K\subset\RR$ as $t\to\infty.$

\begin{lem}
	\label{hatf}For all $t\geq 0,$ we have $$\hat{f}(s,t)\leq f_\infty(s) \text{ when } s\geq 0, \text{ and } \hat{f}(s,t)\geq f_\infty(s) \text{ when } s\leq 0. $$
\end{lem}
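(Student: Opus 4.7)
The plan is to reduce the claimed inequality to a pointwise monotonicity statement for $f'(s,t)$ and then integrate. The key input is already essentially established in Proposition \ref{potential convergence}: since $\psi(\tau,t)$ is nonincreasing in $t$ by Proposition \ref{funflow}, its inverse $\Phi(\tau,t) = \psi^{-1}(\tau,t)$ is nondecreasing in $t$ for each fixed $\tau \in [1,a]$. Composing with $g'$, this gives that
\[
f'(s,t) = \Phi(g'(s),t) \quad \text{is nondecreasing in } t \text{ for each fixed } s,
\]
and by Proposition \ref{potential convergence} its pointwise (monotone increasing) limit is $(\Phi_\infty\circ g')(s)$. In particular, for every $(s,t) \in \RR\times[0,\infty)$,
\[
f'(s,t) \;\le\; (\Phi_\infty\circ g')(s).
\]

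Given this pointwise bound, the lemma follows simply by integrating from $0$ to $s$. For $s \geq 0$ one writes
\[
\hat{f}(s,t) \;=\; \int_0^s f'(r,t)\, dr \;\le\; \int_0^s (\Phi_\infty\circ g')(r)\, dr \;=\; f_\infty(s).
\]
For $s \le 0$ the same inequality under the integral, combined with the reversal of orientation in $\int_s^0$, yields
\[
\hat{f}(s,t) \;=\; -\int_s^0 f'(r,t)\, dr \;\ge\; -\int_s^0 (\Phi_\infty\circ g')(r)\, dr \;=\; f_\infty(s),
\]
which is the second claim.

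There is no real obstacle here beyond checking that the monotonicity is stated in the right direction. The only subtle point worth emphasizing is that the convergence in Proposition \ref{potential convergence} is monotone (so one does not merely have $\lim$ but an actual one-sided bound valid for every finite $t$), and this monotonicity is inherited from $\partial_t \psi \leq 0$, which was proved in Proposition \ref{funflow} using the maximum principle applied to the evolution equation satisfied by $\partial_t \psi$. Once this sign is in hand, the lemma is a direct consequence of the fundamental theorem of calculus, with the sign flip coming purely from the orientation of the interval of integration.
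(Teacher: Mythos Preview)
Your proof is correct and follows essentially the same approach as the paper. The paper's argument is simply to write $\hat{f}(s,t)-f_\infty(s)=\int_{0}^{s}\big(f'(r,t)-(\Phi_\infty\circ g')(r)\big)\,dr$ and invoke Proposition~\ref{potential convergence}, whose ``increasingly'' clause gives exactly the pointwise bound $f'(r,t)\le (\Phi_\infty\circ g')(r)$ that you spell out; your version just makes this monotonicity step more explicit.
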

\begin{proof}
	Since $\hat{f}(s,t)-f_\infty(s)=\int_{0}^{s}(f'(r,t)-(\Phi_\infty\circ g')(r))dr$, the lemma follows from the Proposition \ref{potential convergence}.
\end{proof}
Let $$ \phi(s,t):=f(s,t)-f_0(s), \phi_\infty(s):=f_\infty(s)-f_0(s), $$ where $f_0$ is the global potential for $\chi_0=\ddbar f_0(s) \in b[E_\infty]-[E_0]$ so $f'_0(-\infty)=1$ and $f_0'(\infty)=b$ also since, $f_\infty$ satisfies the modified $J$ equation on $[\la,b]$ with $f'_\infty(-\infty)=\la$ and $f'_\infty(\infty)=b$, $\ddbar f_\infty\in b[E_\infty]-\la[E_0].$

Therefore $\phi(\cdot,t)$ is a K\"ahler potential with respect to the metric $\chi_0$, for all $t\geq 0$ and it satisfies the following equation $$\frac{\d \phi}{\d t}(s,t)=c_k- (n-1)\frac{g'(s)}{f'(s,t)}-\frac{g''(s)}{f''(s,t)}+kf'(s,t) $$
Let $\nu:\RR\to (-\infty,0]$ be a smooth increasing function satisfying 
\begin{gather*}
	\nu(s)=(\la-1)s \text{ when } s\in (-\infty,-1) \text{ and }\\
	\nu(s)=0 \text{ when } s\in (0,\infty).
\end{gather*}
\begin{lem}
	\label{hatp}Let $$ \hat{\phi}_\infty:=\phi_\infty-\nu, $$  then $\hat{\phi}_\infty$ is bounded on $\RR.$
\end{lem}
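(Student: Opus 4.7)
The plan is to exploit the smoothness of $\hat\phi_\infty$ on $\RR$, which reduces the claim to controlling the asymptotic behavior as $s\to\pm\infty$. The function $\nu$ is built precisely to absorb the linear discrepancy $(\la-1)s$ of $\phi_\infty$ at $-\infty$ (where $f_\infty'(-\infty)=\la$ and $f_0'(-\infty)=1$), while at $+\infty$ both $f_0$ and $f_\infty$ have slope $b$, so $\nu\equiv 0$ suffices. I would handle the two ends separately: one is routine, the other requires a local analysis of $\psi_\la$ near its critical point at $\tau=\la$.

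At $s\to+\infty$ the argument is standard. The Calabi asymptotic condition $(ii)$ for $f_0$ gives $f_0(s)=bs+O(1)$. Since $\psi_\la$ is smooth and strictly increasing on $[\la,b]$ with $\psi_\la'(b)>0$, its inverse $\Phi_\infty$ is smooth near $\tau=a$, and composing with the Calabi expansion of $g'$ at $+\infty$ yields $f_\infty'(s)-b=O(e^{-s})$. Integration gives $f_\infty(s)=bs+O(1)$, hence $\phi_\infty$ is bounded as $s\to+\infty$, and so is $\hat\phi_\infty$ since $\nu=0$ there.

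The core analysis is at $s\to-\infty$. Here $f_0(s)=s+O(1)$ from the Calabi ansatz for $f_0$ (with $\al=1$), so the claim reduces to showing $f_\infty(s)-\la s=O(1)$. The critical observation is that $\la$ is characterised by $\psi_\la'(\la)=0$ (equivalent to $C_k(\la)+k\la=(n-1)/\la$), so $\psi_\la$ has a non-degenerate minimum at $\tau=\la$; evaluating the ODE \eqref{ode} at $\tau=\la$ gives $\psi_\la''(\la)=(n-1)/\la^2+k>0$. A Taylor expansion yields $\psi_\la(\tau)-1\sim\tfrac{1}{2}\psi_\la''(\la)(\tau-\la)^2$, so its inverse has a square-root singularity $\Phi_\infty(\tau)-\la\sim C\sqrt{\tau-1}$ as $\tau\to 1^+$. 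Combined with the Calabi expansion $g'(s)-1=O(e^s)$ as $s\to-\infty$ (again with $\al=1$ for $g$), this produces $f_\infty'(s)-\la=O(e^{s/2})$, which is integrable near $-\infty$. Integrating from $0$ gives $f_\infty(s)-\la s=O(1)$, hence $\phi_\infty(s)-(\la-1)s=O(1)$, matching $\nu(s)=(\la-1)s$ for $s<-1$.

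The main obstacle will be the square-root singularity of $\Phi_\infty$ at the boundary $\tau=1$: because $\psi_\la'(\la)=0$, one cannot simply differentiate the identity $\psi_\la\circ\Phi_\infty=\mathrm{id}$ and must instead pass through the Taylor expansion above to extract the precise rate $O(e^{s/2})$. Once this estimate is in place, combining the two end estimates with smoothness on the bounded interval $[-1,0]$ gives $\hat\phi_\infty\in L^\infty(\RR)$.
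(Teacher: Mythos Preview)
Your argument is correct and considerably more detailed than the paper's. The paper dispatches the lemma in one line: it observes that $\hat\phi_\infty=f_\infty-(f_0+\nu)$, that $\ddbar(f_0+\nu)$ extends to a genuine K\"ahler metric on $X_n$ in the class $b[E_\infty]-\la[E_0]$ (since $f_0+\nu$ satisfies the Calabi ansatz boundary conditions with slopes $\la$ and $b$), and that $\ddbar f_\infty$ is a K\"ahler current in the same class; it then concludes that the difference of their potentials is bounded. This is a global/cohomological shortcut, but it tacitly assumes that $f_\infty$ really does define a current with \emph{bounded} local potential across $E_0$, which is not automatic and is exactly what your square-root analysis at $s\to-\infty$ justifies.

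Your approach instead works entirely on $\RR$: you handle the two ends separately by explicit asymptotics, using the Calabi ansatz expansions of $g$ and $f_0$ together with the Taylor expansion of $\psi_\la$ at its critical point $\tau=\la$ to extract the rate $f_\infty'(s)-\la=O(e^{s/2})$. This is a genuinely different route. What you gain is a self-contained, quantitative argument that also identifies the precise H\"older behaviour of $f_\infty$ near $E_0$ (consistent with the conical singularity mentioned in Case~2 of the main theorem). What the paper's argument buys is brevity, at the cost of leaving the boundedness of the current's potential to the reader or to standard pluripotential theory. Both are valid; yours is the more rigorous of the two as written.
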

\begin{proof}
	By definition $\hat{\phi}_\infty=f_\infty-(f_0+\nu)$ and $\ddbar f_\infty\in b[E_\infty]-\la[E_0]$ is a K\"ahler current and $\ddbar(f_0+\nu)$ is a K\"ahler metric, thus $\hat{\phi}_\infty$ is bounded in $\RR.$
\end{proof}
\begin{lem}
	\label{phinu}There exist a $C>0$ such that for all $(s,t)\in \RR\times[0,\infty)$, we have $$\nu(s)-C\leq \phi(s,t)-\phi(0,t)\leq C $$ and $$\lim_{t\to \infty}\frac{\d \phi}{\d t}(0,t)=c_k-C_k(\la). $$
\end{lem}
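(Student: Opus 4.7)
My plan is to derive both conclusions from the identity
\[
\phi(s,t) - \phi(0,t) = \hat{f}(s,t) - \bigl(f_0(s) - f_0(0)\bigr),
\]
by sandwiching $\hat{f}(s,t)$ between two time-independent functions. The two ingredients I would combine are Lemma~\ref{hatf}, which supplies one-sided comparisons of $\hat{f}(s,t)$ with $f_\infty(s)$ pointing in opposite directions on $\{s\ge 0\}$ and $\{s\le 0\}$, together with the monotonicity of $\hat{f}(\cdot,t)$ in $t$. The latter follows from Proposition~\ref{funflow}: $\psi(\tau,t)$ is decreasing in $t$, so by the implicit relation $\psi(f'(s,t),t)=g'(s)$ the map $f'(s,t)$ is pointwise increasing in $t$, and hence $\hat f(s,t)=\int_0^s f'(r,t)\,dr$ is monotone increasing in $t$ for $s\ge 0$ and monotone decreasing in $t$ for $s\le 0$, with $\hat f(s,\infty)=f_\infty(s)$.

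This monotonicity, combined with Lemma~\ref{hatf}, produces the sandwich $\hat f(s,0)\le \hat f(s,t)\le f_\infty(s)$ for $s\ge 0$ and $f_\infty(s)\le \hat f(s,t)\le \hat f(s,0)$ for $s\le 0$. Subtracting $f_0(s)-f_0(0)$ and recognising that $f_\infty(s)-(f_0(s)-f_0(0))=\phi_\infty(s)+f_0(0)$ while $\hat f(s,0)-(f_0(s)-f_0(0))=\phi(s,0)-\phi(0,0)$, both ends of the sandwich become uniformly bounded in $s$: the initial potential $\phi(\cdot,0)$ is a smooth function on the compact manifold $X_n$, hence $|\phi(s,0)-\phi(0,0)|\le C$; and by Lemma~\ref{hatp}, $\phi_\infty=\hat{\phi}_\infty+\nu$ with $\hat{\phi}_\infty$ bounded on $\RR$ and $\nu\le 0$ identically vanishing on $[0,\infty)$. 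A case-by-case comparison (upper bound from the $\phi_\infty$-side on $\{s\ge 0\}$ and from the $\phi(\cdot,0)$-side on $\{s\le 0\}$; lower bound vice versa) yields the stated $\nu(s)-C\le\phi(s,t)-\phi(0,t)\le C$ on $\RR\times[0,\infty)$.

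For the limit at $s=0$, I would plug $s=0$ into the flow equation
\[
\tfrac{\d\phi}{\d t}(0,t) = c_k - (n-1)\tfrac{g'(0)}{f'(0,t)} - \tfrac{g''(0)}{f''(0,t)} + kf'(0,t)
\]
and pass to the limit termwise. Since $g'(0)\in(1,a)$, the target value $\tau_0:=f'_\infty(0)=\Phi_\infty(g'(0))$ lies strictly inside $(\la,b)$, so the $C^1_{loc}$ convergence $\psi(\cdot,t)\to\psi_\infty$ on $(\la,b)$ from Proposition~\ref{funflow}, combined with the inverse-function identities $\Phi=\psi^{-1}$ and $\Phi_\tau=1/\psi_\tau$, yields $f'(0,t)\to\tau_0$ and $f''(0,t)\to g''(0)/\psi_\infty'(\tau_0)=f''_\infty(0)$ as $t\to\infty$. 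Using the first-order ODE $\psi_\infty'(\tau)+(n-1)\psi_\infty(\tau)/\tau-k\tau=C_k(\la)$ that $\psi_\infty$ satisfies on $(\la,b)$ (obtained at the end of the proof of Proposition~\ref{funflow}), evaluated at $\tau_0$ with $\psi_\infty(\tau_0)=g'(0)$ and $\psi_\infty'(\tau_0)=g''(0)/f''_\infty(0)$, the three non-constant terms collapse in the limit to $-C_k(\la)$, giving the asserted equality $\lim_{t\to\infty}\tfrac{\d\phi}{\d t}(0,t)=c_k-C_k(\la)$.

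The principal subtlety lies not in any single estimate but in the bookkeeping: matching the side of each one-sided/monotone bound with the correct half-line so that the $\nu(s)$ correction is present in the lower bound exactly where it is nontrivial, and absent where it vanishes. Once this pairing is set up, the proof is essentially a direct assembly of Lemmas~\ref{hatf} and~\ref{hatp} with the monotonicity and convergence statements from Proposition~\ref{funflow}.
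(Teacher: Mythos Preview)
Your proof is correct and follows essentially the same route as the paper: the identity $\phi(s,t)-\phi(0,t)=\hat f(s,t)-(f_0(s)-f_0(0))$, Lemma~\ref{hatf} and Lemma~\ref{hatp} for one side of the sandwich, and Propositions~\ref{funflow}/\ref{potential convergence} for the limit at $s=0$. Your explicit use of the monotonicity of $\hat f(\cdot,t)$ in $t$ together with the boundedness of the initial potential $\phi(\cdot,0)$ supplies the ``other half'' of the two-sided bound that the paper's terse sentence (``follows from Lemma~\ref{hatf} and Lemma~\ref{hatp}'') leaves implicit.
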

\begin{proof}
		Without loss of generality we can assume $f_0(0)=0$ and by the definition of $\phi(s,t)$, we have 
	$$\phi(s,t)-\phi(0,t)=f(s,t)-f_0(s)-[f(0,t)-f_0(0)]=\hat{f}(s,t)-f_0(s). $$ and the first equation of the lemma follows from Lemma \ref{hatf} and Lemma \ref{hatp}. And by Proposition \ref{potential convergence} and Proposition \ref{funflow} for any $K\subset \subset \RR$, we have 
	$$\lim_{t\to \infty}\left|\frac{g''}{f''}+(n-1)\frac{g'}{f'}-kf'-C_k(\la)\right|_{L^\infty(K)}=0. $$
	Hence $$\lim_{t\to \infty}\frac{\d \phi}{\d t}(0,t)=c_k-C_k(\la). $$
\end{proof}
We chose $\phi(s,t)$ as a solution of the flow with the special initial metric, now let $\vp$ be the solution of the modified $J$ flow with any initial data $\vp(.,0)$ with Calabi symmetry. And let $$\vartheta(s,t)=\vp(s,t)-f_0(s) \text{ and } \chi_0=\ddbar f_0.$$
Then $\vartheta(.,t)$ are K\"ahler potentials of $[\chi_0]$ and the modified $J$ flow is $$\frac{\d \vartheta}{\d t}=c+\theta_\xi(\chi_\vartheta)-\La_{\chi_{\vartheta}}\omega. $$
\begin{lem}
	\label{thest}There exists a $C>0$ such that on $X_n\times[0,\infty)$, $$\phi_\infty(s)-C\leq \vartheta(s,t)-\phi(0,t)\leq C. $$
\end{lem}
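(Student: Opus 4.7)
The plan is to reduce the desired inequality to a direct $L^\infty$ comparison between $\vartheta$ and the special-initial-data potential $\phi$. Concretely, I will first establish that there is a constant $C>0$ such that
\[
-C \le \vartheta(s,t)-\phi(s,t) \le C \quad \text{on } X_n\times[0,\infty),
\]
and then derive the bounds on $\vartheta-\phi(0,t)$ by combining this with the bounds on $\phi-\phi(0,t)$ from Lemma \ref{phinu} and on $\phi_\infty-\nu$ from Lemma \ref{hatp}. For the upper bound of the lemma this gives $\vartheta-\phi(0,t)=(\vartheta-\phi)+(\phi-\phi(0,t))\le C+C$, and for the lower bound $\vartheta-\phi(0,t)\ge -C+(\nu(s)-C)\ge \phi_\infty(s)-C'$.

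The heart of the proof is the $L^\infty$ comparison $|\vartheta-\phi|\le C$, which I obtain by the parabolic maximum principle applied to $w:=\vartheta-\phi$ on the compact manifold $X_n$. Both $\vartheta$ and $\phi$ are smooth, $T$-invariant solutions of the modified $J$-flow for all finite $t$, so $w$ is smooth on $X_n\times[0,T]$ for every $T$. At a point where $w(\cdot,t)$ attains its maximum we have $\ddbar w\le 0$, i.e.\ $\chi_\vartheta\le \chi_\phi$, and the standard linear-algebra fact that $A\ge B>0$ implies $A^{-1}\le B^{-1}$ (and hence $\mathrm{tr}(B^{-1}\omega)\ge \mathrm{tr}(A^{-1}\omega)$ for $\omega\ge 0$) gives $\La_{\chi_\vartheta}\omega \ge \La_{\chi_\phi}\omega$. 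For the Hamiltonian term, the Calabi ansatz formula
$\theta_\xi(\chi_u)=k\bigl(f_u'(s)-\tfrac{n}{n+1}\tfrac{b^{n+1}-1}{b^n-1}\bigr)$
shows that the normalizing constant depends only on the class $[\chi]$, and the first-order condition $dw=0$ at the maximum forces $f_\vartheta'(s)=f_\phi'(s)$ at that point, so $\theta_\xi(\chi_\vartheta)=\theta_\xi(\chi_\phi)$ there. Combining, at a maximum
\[
\partial_t w = \bigl(\theta_\xi(\chi_\vartheta)-\theta_\xi(\chi_\phi)\bigr)+\bigl(\La_{\chi_\phi}\omega-\La_{\chi_\vartheta}\omega\bigr)\le 0,
\]
so $\max_{X_n} w(\cdot,t)$ is non-increasing in $t$ and bounded above by $\max_{X_n}(\vartheta(\cdot,0)-\phi(\cdot,0))<\infty$. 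The symmetric argument at a minimum yields the matching lower bound.

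The main obstacle, compared with the pure $J$-flow setting of \cite{vrj}, is controlling the Hamiltonian term under the maximum principle; in general $\theta_\xi(\chi_\vartheta)-\theta_\xi(\chi_\phi)$ need not have a sign at a critical point of $w$. What saves the argument here is the specific structure of the Calabi ansatz: $\theta_\xi$ is an affine function of the moment coordinate $\tau=f_u'(s)$ with a class-invariant additive constant, so the first-order condition at the maximum annihilates the Hamiltonian difference exactly. Once this observation is in place, the statement of the lemma follows immediately by assembling the comparison with Lemma \ref{phinu} and Lemma \ref{hatp} as above, and no further PDE input is required.
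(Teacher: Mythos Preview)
Your proof is correct and follows essentially the same route as the paper: first bound $|\vartheta-\phi|$ uniformly via the maximum principle, then assemble the conclusion from Lemma~\ref{phinu} and Lemma~\ref{hatp}. The only cosmetic difference is that the paper linearizes the difference of the two flows (using $\theta_\xi(\chi_\vp)=\theta_\xi(\chi)+\xi(\vp)$ to write $\partial_t(\phi-\vartheta)=F^{i\bar j}(\phi-\vartheta)_{i\bar j}+\xi(\phi-\vartheta)$ and applying the maximum principle to this linear drift--diffusion equation), whereas you compare the nonlinear operators directly at an extremum. One small remark: your claim that the Calabi ansatz is what ``saves'' the Hamiltonian term is slightly overstated---the general identity $\theta_\xi(\chi_\vartheta)-\theta_\xi(\chi_\phi)=\xi(\vartheta-\phi)$ already forces this difference to vanish at any critical point of $w$ on the compact manifold, with no symmetry assumption needed for this step.
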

\begin{proof}
	We know that $ \theta_\xi(\chi_\vp)=\theta_\xi(\chi)+\xi(\vp). $
	If we take difference between the flows with initial values $ \phi(\cdot,0) $ and $ \vartheta(\cdot,0) $, then $ \phi(x,t)-\vartheta(x,t) $ satisfies the following equation 
	$$ \frac{\d}{\d t} \left(\phi(x,t)-\vartheta(x,t)\right)=F^{i\bar{j}}[(1-s_t)\chi_{\phi(t)}+s_t\chi_{\vartheta(t)}](\phi(t)-\vartheta(t))_{i\bar{j}}+\xi(\phi(t)-\vartheta(t)), $$
	where $ 0<s_t<1. $ By the maximum principle we have $$ |\phi(x,t)-\vartheta(x,t)|_{C^0}\leq |\phi(x,0)-\vartheta(x,0)|_{C^0}\leq C, \text{ for all }t \in [0,\infty). $$ This together with the last lemma we can see that $$\nu(s)-C_1\leq \vartheta(s,t)-\phi(0,t)\leq C_1. $$ And by Lemma \ref{hatp} we can get that $\nu(s)-C_2\leq \phi(s,t)\leq \nu(s)+C_2$ this implies that $$\phi_\infty(s)-C\leq \vartheta(s,t)-\phi(0,t)\leq C. $$ 
\end{proof}
The K\"ahler potential $\phi_\infty$ of $\chi_0$ is not a strict subsolution of the modified $J$ equation for the new K\"ahler classes $[\chi_\infty]=b[E_\infty]-\la[E_0] $ and $[\omega]$ and the constant $C_k(\la),$ because $\chi_\infty$ blows up in the normal direction to $E_0.$

Let $f_\infty$ be the limiting solution corresponding to the new K\"ahler classes. Then $f_\infty'(-\infty)=\la>1$ and $$\frac{g''(s)}{f_\infty''(s)}+(n-1)\frac{g'(s)}{f_\infty'(s)}-kf_\infty'(s)=C_k(\la) $$
this implies that $$(n-2)\frac{g'(s)}{f_\infty'(s)}+\frac{g''(s)}{f_\infty''(s)}-kf_\infty'(s)\leq C_k(\la)-\frac{1}{b}, ~(n-1)\frac{g'(s)}{f_\infty'(s)}-kf_\infty'(s)\leq C_k(\la) $$
and the equality holds at $-\infty$ in the second inequality as $$\lim_{s\to -\infty}e^{-s}f_\infty''(s)=\infty. $$
Our aim is to get a strict subsolution so we modify $f_\infty$ using the barrier function $\nu$ which is increasing and smooth and defined in the following way $$\nu(s)=s \text{ when } s\leq -1 \text{ and } \nu(s)=0 \text{ when } s>0. $$

Let $$V_\infty(s)=f_\infty(s)+\vep\nu(s) $$ for sufficiently small $\vep>0.$ Then $V_\infty$ defines a K\"ahler metric $\Omega=\ddbar V_\infty\in b[E_\infty]-(\la+\vep)[E_0].$
Further, we can choose a constant $d>0$ so that \[ \begin{split}
	\label{strict subsoln}(n-2)\frac{g'(s)}{V'_\infty(s)}+\frac{g''(s)}{V''_\infty(s)}-kV_\infty'(s)<&C_k(\la)-2d\vep \\ \text{ and }\quad (n-1)\frac{g'(s)}{V'_\infty(s)}-kV_\infty'(s)<& C_k(\la)-2d\vep.
\end{split} \]
We will use this subsolution to prove the second order estimate, as Lemma \ref{2nd order est}.

Now, let $$ \hat{\vartheta}(s,t):=\vartheta(s,t)-\phi(0,t). $$
Then \begin{align*}
	 \frac{\d \hat{\vartheta}}{\d t}=&c+\theta_\xi(\chi_{\hat{\vartheta}(t)})-\La_{\chi_0+\ddbar\hat{\vartheta}}(\omega)-\frac{\d \phi}{\d t}(0,t)\\ =&C_k(\la)+\delta(t)+k\vartheta'(s,t)+kf_0'(s)-\La_{\chi_0+\ddbar\hat{\vartheta}}(\omega),
\end{align*} where $\delta(t):=c_k-C_k(\la)-\frac{\d \phi}{\d t}(0,t)\to 0$
as $t\to \infty$ by Lemma \ref{phinu}.

Let $$u:=\hat{\vartheta}-(V_\infty-f_0). $$ Then $ u(s,t)=\hat{\vartheta}(s,t)-(f_\infty(s)+\vep\nu(s)-f_0(s))=\hat{\vartheta}(s,t)-(\phi_\infty(s)-\vep\nu(s)), $
thus $u$ is smooth on $X_n\smallsetminus E_0$ and tends to $\infty$ along $E_0.$

\begin{lem}
	\label{low. bound barrier}There exist a constant $C>0$ such that $$ \inf_{X_n\times[0,\infty)}u>-C. $$ 
\end{lem}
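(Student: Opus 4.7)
The plan is to deduce this bound immediately from Lemma~\ref{thest}, without invoking any maximum principle or the subsolution inequalities \eqref{strict subsoln}. First I would unwind the definition of $u$: since $V_\infty = f_\infty + \vep\nu$ and $\phi_\infty = f_\infty - f_0$, one has
\[
u(s,t) \;=\; \hat{\vartheta}(s,t) - \big(V_\infty(s) - f_0(s)\big) \;=\; \hat{\vartheta}(s,t) - \phi_\infty(s) - \vep\nu(s).
\]
Lemma~\ref{thest} supplies the uniform estimate $\hat{\vartheta}(s,t) - \phi_\infty(s) \geq -C$ on all of $X_n\times[0,\infty)$. By construction the barrier $\nu$ takes values in $(-\infty,0]$, so $-\vep\nu(s)\geq 0$ for every $s\in\RR$. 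Adding these two inequalities yields
\[
u(s,t) \;\geq\; -C - \vep\nu(s) \;\geq\; -C,
\]
uniformly in $(s,t)\in (X_n\smallsetminus E_0)\times [0,\infty)$, which is exactly the asserted lower bound.

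The only point worth checking is consistency with the behavior of $u$ along $E_0$: the leading order of $\hat{\vartheta}$ as $s\to -\infty$ is controlled by the K\"ahler class $b[E_\infty]-[E_0]$, whereas $\ddbar V_\infty\in b[E_\infty]-(\la+\vep)[E_0]$ with $\la>1$, so $u$ blows up like $-\vep s\to +\infty$ along $E_0$, matching the description in the text. Thus the infimum is effectively controlled by the interior region where $\nu\equiv 0$, and there Lemma~\ref{thest} applies directly to give the bound. I do not expect any real obstacle in this step: the essential analytic work has already been carried out in Lemma~\ref{thest}, and the barrier function $\nu$ together with the strict subsolution inequalities \eqref{strict subsoln} will only enter nontrivially in the second-order estimate that follows, where one needs the maximum of a weighted quantity involving $u$ to be attained away from $E_0$.
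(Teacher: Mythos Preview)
Your argument is correct and is essentially identical to the paper's own proof: both unwind $u=\hat\vartheta-\phi_\infty-\vep\nu$, apply the lower bound $\hat\vartheta-\phi_\infty\geq -C$ from Lemma~\ref{thest}, and use $\nu\leq 0$ to absorb the remaining term. (In fact your computation has the signs straight where the paper's displayed formula $u=\hat\vartheta-(\phi_\infty-\vep\nu)$ carries a typo, but the intended argument is exactly the one you give.)
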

\begin{proof}
	By Lemma \ref{thest} we have
	
	$ u(s,t)=\hat{\vartheta}(s,t)-(\phi_\infty(s)-\vep\nu(s))\geq \phi_\infty(s)-C-(\phi_\infty(s)-\vep\nu(s))\geq -C .$
\end{proof}
The following lemma is analogous to Lemma 3.1 of Song-Weinkove in \cite{swflow} which guarantee us the second order estimates away from the exceptional divisor.
\begin{lem}
	\label{2nd order est}Let $\chi=\chi_0+\ddbar \vartheta$ be the K\"ahler metrics corresponding to the solution of the $J$ flow. Then there exist $A,C>0$ such that on $X_n\smallsetminus E_0,$ we have $$\La_\omega\chi\leq Ce^{Au} .$$ 
\end{lem}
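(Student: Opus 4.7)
The plan is to apply the parabolic maximum principle to $Q(x,t):=\log\La_\omega\chi-Au(x,t)$ on $(X_n\smallsetminus E_0)\times[0,T]$, for $A>0$ large and $T>0$ arbitrary. Since $u\to+\infty$ along $E_0$ (because $V_\infty-f_0$ has a logarithmic pole coming from the barrier $\vep\nu$, whereas Lemma~\ref{thest} gives $\hat\vartheta\leq\phi_\infty+C$), we have $Q\to-\infty$ on $E_0$. Combined with Lemma~\ref{low. bound barrier} bounding $u$ from below, this forces any supremum of $Q$ on $(X_n\smallsetminus E_0)\times[0,T]$ to be attained either at $t=0$ (controlled by initial data) or at an interior spatial point $(p_0,t_0)$ with $t_0>0$; at such a point $(\d_t-\tDelta-\xi)Q\geq 0$, using that $Q$ is $T$-invariant so $\xi(Q)|_{p_0}=0$.

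Next I would derive the evolution inequalities for each piece of $Q$. A Song--Weinkove-type computation (incorporating $\xi$ into the linearisation to absorb the $\theta_\xi(\chi_\vp)$ contribution, and using a bisectional curvature lower bound for $\omega$) gives
\[
\bigl(\d_t-\tDelta-\xi\bigr)\log\La_\omega\chi \leq C_0\,\La_\omega\chi + C_1.
\]
For the barrier term, $\ddbar u=\chi-\Omega$ implies $\tDelta u=\La_\chi\omega-h^{i\bar j}\Omega_{i\bar j}$, and combining with the evolution of $\hat\vartheta$ yields
\[
\bigl(\d_t-\tDelta-\xi\bigr)u = C_k(\la)+\delta(t)+kf'(s,t)-2\La_\chi\omega+h^{i\bar j}\Omega_{i\bar j}-\xi(u).
\]
The crux is then to exploit the strict subsolution gap $2d\vep$ of \eqref{strict subsoln}. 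Working in the simultaneous diagonalisation of $\omega,\chi,\Omega$ afforded by the Calabi ansatz, the two inequalities of \eqref{strict subsoln} control the base and fibre eigen-directions separately and, together with the a priori bound \eqref{trace upper bound} on $\La_\chi\omega$, should produce a pointwise lower bound of the form
\[
h^{i\bar j}\Omega_{i\bar j}+kf'(s,t)-\La_\chi\omega-\xi(u)-C_k(\la) \geq d\vep\bigl(\La_\omega\chi+h^{i\bar j}g_{i\bar j}\bigr)-C.
\]
Inserting this into $(\d_t-\tDelta-\xi)Q\geq 0$ and choosing $A>C_0/(d\vep)$ forces $\La_\omega\chi(p_0,t_0)$ to be bounded independently of $T$; unwinding then gives $\La_\omega\chi\leq Ce^{Au}$ globally on $X_n\smallsetminus E_0$.

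The main obstacle is the pointwise subsolution-gap inequality above. The Hamiltonian contribution $kf'(s,t)$ enters both $\d_t\hat\vartheta$ and, via $\xi$, the linearisation, and neither of these is uniformly small near $E_0$, where $f'\to\la$. The delicate step is to verify that the two strict subsolution inequalities of \eqref{strict subsoln} interact with $\xi(u)$ in such a way that the gap $d\vep$ genuinely appears with the correct sign in front of both $\La_\omega\chi$ and $h^{i\bar j}g_{i\bar j}$, allowing a single choice of $A$ to close the maximum-principle estimate.
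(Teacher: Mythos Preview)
Your overall strategy---applying the maximum principle to $\log\La_\omega\chi - Au$ and exploiting the strict subsolution $\Omega=\ddbar V_\infty$---matches the paper's. The genuine gap is in how you use the subsolution. The displayed pointwise inequality
\[
h^{i\bar j}\Omega_{i\bar j}+kf'(s,t)-\La_\chi\omega-\xi(u)-C_k(\la)\;\geq\; d\vep\bigl(\La_\omega\chi+h^{i\bar j}g_{i\bar j}\bigr)-C
\]
is false. In Calabi-ansatz coordinates the fibre eigenvalue of $\chi$ relative to $\omega$ is $f''/g''$; if this is large then the left side stays bounded (the relevant fibre contributions $g''V_\infty''/(f'')^2$ and $g''/f''$ both tend to~$0$), whereas $d\vep\,\La_\omega\chi\geq d\vep\,f''/g''\to\infty$. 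So no such pointwise gap exists, and there is no way to absorb a $C_0\La_\omega\chi$ term by choosing $A$ large. Relatedly, your evolution inequality $(\d_t-\tDelta-\xi)\log\La_\omega\chi\leq C_0\La_\omega\chi+C_1$ is too crude: the actual curvature contribution is $-(n\La_\omega\chi)^{-1}\bigl(h^{k\bar l}R^{i\bar j}_{k\bar l}\chi_{i\bar j}-\chi^{k\bar l}R_{k\bar l}\bigr)$, which is $O(1)$ thanks to the uniform lower bound $\chi\geq A^{-1}\omega$ from \eqref{eigen.lower bound}.

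The paper reverses the logic. Because the curvature term is $O(1)$, at the maximum $(p_0,t_0)$ one divides by $A$, takes $A$ large and $t_0$ large enough that $|\delta(t_0)|<d\vep$, and obtains
\[
\eta_\infty + h^{i\bar j}\Omega_{i\bar j} - 2\La_\chi\omega + kV_\infty' \;<\; d\vep .
\]
Only now does the subsolution enter: writing everything in terms of $f',f'',g',g'',V_\infty',V_\infty''$ and completing the square separately in the base and fibre directions against the two inequalities in \eqref{strict subsoln} yields $g''/f''\geq d\vep/2$ and $g'/f'\geq d\vep/2$ at $(p_0,t_0)$, hence $\La_\omega\chi(p_0,t_0)=(n-1)f'/g'+f''/g''\leq 2n/(d\vep)$. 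In other words, the subsolution gap is used to bound $\La_\omega\chi$ \emph{directly at the maximum point}, not to produce a coefficient in front of $\La_\omega\chi$ in a global pointwise inequality.
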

\begin{proof}
	As in \cite{vrj}, we can apply the maximum principle argument to $H=\log\La_\omega\chi-Au$ with the perturbed heat operator and then use the subsolution construction to get the required estimate.

	The evolution for $\log\La_\omega\chi$ is given by the following general estimate $$\left(\frac{\d }{\d t}-\Delta_t\right)\log\La_\omega\chi\leq -\frac{n}{\La_\omega\chi}\left(h^{k\bar{l}}R_{k\bar{l}}^{i\bar{j}}\chi_{i\bar{j}}-\chi^{k\bar{l}}R_{k\bar{l}}\right), $$ where $h^{i\bar{j}}=\chi^{i\bar{l}}\omega_{k\bar{l}}\chi^{k\bar{j}},\Delta_t:=n^{-1}h^{i\bar{j}}\d_j\dbar_j,$ and $R$ is the curvature tensor for $\omega.$

Also 
\begin{align*}
	\left(\frac{\d}{\d t}-\xi-\Delta_t\right)u=&\frac{\d \hat{\vartheta}}{\d t}-\xi(\hat{\vartheta})+\xi(V_\infty-f_0)-\Delta_tu\\
	=&\eta_\infty+\delta(t)-\La_{\chi_0+\ddbar\hat{\vartheta}}(\omega)+k\vartheta'+kf_0'-k\vartheta'\\
	&+kV_\infty'-kf_0'-h^{i\bar{j}}\vartheta_{i\bar{j}}+h^{i\bar{j}}\Omega_{i\bar{j}}-h^{i\bar{j}}f_{0_{i\bar{j}}}\\
	=&\eta_\infty+\delta(t)-2\chi^{i\bar{j}}\omega_{i\bar{j}}+\chi^{i\bar{l}}\omega_{k\bar{l}}\chi^{k\bar{j}}\Omega_{i\bar{j}}+kV_\infty'
\end{align*}
Suppose the maximum of $H$ on $X_n\times[0,t_0]$ is achieved at $(p_0,t_0),$ with $t_0>0$, otherwise the estimate follows trivially. Then applying maximum principle, we have at $(p_0,t_0)$
\begin{multline}
	\label{max principle}0\leq \left(\frac{\d}{\d t}-\xi-\Delta_t\right)H\\
	\leq-(n\La_\omega\chi)^{-1}(h^{k\bar{l}}R_{k\bar{l}}^{i\bar{j}}\chi_{i\bar{j}}-\chi^{k\bar{l}}R_{k\bar{l}}+n^{-1}\xi(\La_\omega\chi))\\
	-A(\eta_\infty+\delta(t)-2\chi^{i\bar{j}}\omega_{i\bar{j}}+\chi^{i\bar{l}}\omega_{k\bar{l}}\chi^{k\bar{j}}\Omega_{i\bar{j}}+kV_\infty')
\end{multline}
Since $\La_\omega\chi$ is uniformly bounded below(by inequality \eqref{eigen.lower bound}), for a given $2d\vep>0$ we can choose sufficiently large $A$ such that at $(p_0,t_0),$ we have 
$$ \eta_\infty+\delta(t)+h^{i\bar{j}}\Omega_{i\bar{j}}-2\chi^{i\bar{j}}\omega_{i\bar{j}}+kV_\infty'\leq -(nA\La_\omega\chi)^{-1}(h^{k\bar{l}}R_{k\bar{l}}^{i\bar{j}}\chi_{i\bar{j}}-\chi^{k\bar{l}}R_{k\bar{l}}-n\xi(\La_\omega\chi))<2d\vep $$
As $\delta(t)\to 0$ when $t\to \infty$, we can find a $T$ such that $|\delta(t)|<d\vep,$ for all $t>T.$ And, we can take $t_0>T$, otherwise, the estimate follows trivially. This implies that \[ \label{large a} \eta_\infty+h^{i\bar{j}}\Omega_{i\bar{j}}-2\chi^{i\bar{j}}\omega_{i\bar{j}}+kV_\infty'< d\vep \]
From \eqref{strict subsoln} we already have that $$ 2d\vep<\eta_\infty-(n-2)\frac{g'}{V_\infty'}-\frac{g''}{V_\infty''}+kV_\infty'\text{ and }2d\vep<\eta_\infty-(n-1)\frac{g'}{V_\infty'}+kV_\infty'. $$
Note that $[h^{i\bar{j}}]=diag\left[\frac{g'}{(f')^2},\dots,\frac{g'}{(f')^2},\frac{g''}{(f'')^2}\right]$, if we apply this in \eqref{max principle} and using \eqref{large a} and the above inequality we get
\begin{align*}
	d\vep\geq& \eta_\infty+(n-1)\frac{g'V_\infty'}{(f')^2}+\frac{g''V_\infty''}{(f'')^2}-2(n-1)\frac{g'}{f'}-2\frac{g''}{f''}+kV_\infty'\\
	=&(n-1)g'V_\infty'\left(\frac{1}{f'}-\frac{1}{V_\infty'}\right)^2+\frac{g''V_\infty''}{(f'')^2}-2\frac{g''}{f''}+\left(\eta_\infty-(n-1)\frac{g'}{V_\infty'}+kV_\infty'\right)\\
	\geq&2d\vep-2\frac{g''}{f''}
\end{align*}
and 
\begin{align*}
	d\vep\geq&\eta_\infty+(n-1)\frac{g'V_\infty'}{(f')^2}+\frac{g''V_\infty''}{(f'')^2}-2(n-1)\frac{g'}{f'}-2\frac{g''}{f''}+kV_\infty'\\
	=&(n-2)g'V_\infty'\left(\frac{1}{f'}-\frac{1}{V_\infty'}\right)^2+g''V_\infty''\left(\frac{1}{f''}-\frac{1}{V_\infty''}\right)^2+\frac{g''V_\infty''}{(f'')^2}-2\frac{g'}{f'}\\
	&\qquad+\left(\eta_\infty-(n-2)\frac{g'}{V_\infty'}-\frac{g''}{V_\infty''}+kV_\infty'\right)\\
	\geq&2d\vep-2\frac{g'}{f'}
\end{align*}
Combinig the above estimates, we have at $(p_0,t_0)$, $$\La_\omega\chi =(n-1)\frac{f'}{g'}+\frac{f''}{g''}\leq \frac{2n}{d\vep} $$ and so for any $p\in X_n$ and $t\in [0,t_0]$ we have 
\begin{align*}
	\log\La_\omega\chi(p,t)=&H(p,t)+Au(p,t)\\
	\leq&\log\La_\omega\chi(p_0,t_0)-Au(p_0,t_0)+Au(p,t)\\
	\leq&-\log d\vep+\log 2n+Au(p,t)-A\inf_{X_n}u(p,t_0).
\end{align*}

	Taking exponential on both sides and using the Lemma \ref{low. bound barrier} we get the required inequality.
\end{proof}
\begin{lem}
	\label{high.est}For any $K\subset \subset X_n\smallsetminus E_0$ and $l>0,$ there exists $C:=C(K,l)$ such that $$\|\hat{\vartheta}\|_{C^l(K)}\leq C. $$
\end{lem}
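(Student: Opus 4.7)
The plan is to combine the second-order estimate in Lemma \ref{2nd order est} with interior Evans--Krylov and parabolic Schauder theory on compact subsets of $X_n\smallsetminus E_0$, closely following the scheme in \cite{vrj}.

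First I would upgrade the pointwise inequality $\La_\omega\chi\le Ce^{Au}$ to a uniform bound on $K$. By Lemma \ref{thest} we have $\hat\vartheta\le C$ on $X_n\times[0,\infty)$, and since $K\subset\subset X_n\smallsetminus E_0$ the functions $\phi_\infty$ and $\nu$ are smooth and bounded on $K$; hence $u=\hat\vartheta-(\phi_\infty-\vep\nu)$ is uniformly bounded above on $K\times[0,\infty)$. Together with the lower bound from Lemma \ref{low. bound barrier}, this gives two-sided control of $u$ on $K$, and Lemma \ref{2nd order est} then yields $\La_\omega\chi\le C(K)$ on $K\times[0,\infty)$. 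Combined with the uniform lower bound $\chi_\vartheta\ge A\omega$ from \eqref{eigen.lower bound}, the eigenvalues of $\chi_\vartheta$ relative to $\omega$ are pinched between two positive constants depending only on $K$, so the flow is uniformly parabolic on $K$ and $\|\hat\vartheta\|_{C^2(K)}$ is uniformly bounded in $t$.

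Next I would apply the interior Evans--Krylov theorem. Choose a slightly larger compact $K'$ with $K\subset\subset K'\subset\subset X_n\smallsetminus E_0$. Locally the modified $J$-flow \eqref{j flow} can be written as $\frac{\d\vartheta}{\d t}=F(\vartheta_{i\bar j})+\xi(\vartheta)+G$, where $F(\vartheta_{i\bar j})=-\La_{\chi_0+\ddbar\vartheta}\omega$ is a smooth concave function of the complex Hessian on the set of admissible matrices (concavity of $-\La_{\chi_\vp}\omega=-\sum_i\mu_i^{-1}$ in the eigenvalues $\mu_i$ of $\chi_\vp$ with respect to $\omega$), the drift $\xi$ has smooth coefficients, and $G$ depends only on the background data. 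The uniform ellipticity just established allows the concave parabolic Evans--Krylov estimate to be applied on $K'$, producing a uniform $C^{2,\al}$ bound $\|\hat\vartheta\|_{C^{2,\al}(K\times[T,\infty))}\le C(K,\al)$ for some $\al\in(0,1)$ and some $T>0$; shorter times are handled by the fact that $\hat\vartheta$ is smooth and bounded on $[0,T]$.

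With uniform $C^{2,\al}$ bounds in hand, the coefficients of the linearised flow become uniformly $C^\al$ on compact subsets of $X_n\smallsetminus E_0$, and a standard bootstrap via interior parabolic Schauder estimates upgrades these to uniform $C^{l,\al}$ bounds for every $l\ge 2$, yielding the lemma. The main point, rather than a genuine obstacle, is that every constant must be independent of $t\in[0,\infty)$ and depend only on $K$ and $l$; this is guaranteed since each ingredient (Lemma \ref{2nd order est}, Evans--Krylov, Schauder) involves constants controlled purely by the uniform ellipticity and a uniform $C^0$ bound for $\hat\vartheta$, both of which we have secured on any compact set away from $E_0$.
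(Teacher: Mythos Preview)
Your argument is correct and follows the same outline as the paper's proof: uniform $C^2$-control from Lemma~\ref{2nd order est}, then Evans--Krylov for $C^{2,\alpha}$, then Schauder bootstrapping. The paper's proof is only a brief sketch, and your write-up is in fact more explicit about why Lemma~\ref{2nd order est} gives a genuine $C^2$-bound on $K$ (by bounding $u$ above via Lemma~\ref{thest}). The one minor difference is that the paper cites Song--Weinkove's argument from \cite{swflow} to recover a $C^0$-estimate from the $C^2$-estimate, whereas you read the $C^0$-bound on $\hat\vartheta$ directly off Lemma~\ref{thest} on compact sets away from $E_0$; your route is more direct and perfectly adequate here.
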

\begin{proof}
	In section 3 of \cite{swflow}, Song and Weinkove derived the $ C^0 $ estimate by sequence of lemmas using just the $ C^2 $ estimate and not any equation. As we have $ C^2 $ estimate in Lemma \ref{2nd order est}, using the arguments from \cite{swflow} we can derive the $ C^0 $ estimate.
	By Schauder estimates and Evans-Krylov estimate we can obtain the uniform $ C^{2,\al} $ estimate and by bootstrapping we get the higher order estimates. 
\end{proof}
We got all the  necessary estimates to pass the limit, now we prove in the next theorem that irrespective of the initial value the flow always converges to a unique limiting solution.
\begin{thm}
	\label{unique}Let $\vartheta(t)$ be the solution of the modified $J$-flow, then $ \La_{\chi_{\vartheta(t)}}\omega-\theta_\xi(\chi_{\vartheta(t)}) $ converges to $C_\la$ smoothly on $X_n\smallsetminus E_0$ as $t\to \infty,$ where $C_\la=n\frac{ab^{n-1}-\la^{n-1}}{b^n-\la^n}.$
\end{thm}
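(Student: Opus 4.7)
The plan is to combine the uniform higher-order estimates from Lemma \ref{high.est} with a comparison argument against the special solution $\phi$ of Section 3, whose convergence is already known from Proposition \ref{funflow} and Proposition \ref{potential convergence}.

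First, by Lemma \ref{high.est}, $\{\hat\vartheta(\cdot,t)\}_{t\geq 0}$ is bounded in $C^l_{\mathrm{loc}}(X_n\smallsetminus E_0)$ for every $l$. By Arzela-Ascoli, any sequence $t_j\to\infty$ admits a subsequence along which $\hat\vartheta(\cdot,t_j)$ converges smoothly on compact subsets of $X_n\smallsetminus E_0$ to a limit $\hat\vartheta_\infty$, so that $\chi_{\vartheta(t_j)}\to\chi_{\hat\vartheta_\infty}$ and $\La_{\chi_{\vartheta(t_j)}}\omega-\theta_\xi(\chi_{\vartheta(t_j)})$ converges smoothly on such subsets.

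To identify the limit uniquely, I compare with the special solution $\phi$ of Lemma \ref{initial}. Proposition \ref{potential convergence} yields $\hat f(\cdot,t)\to f_\infty$ smoothly on compact subsets of $X_n\smallsetminus E_0$, and $\chi_{f_\infty}$ solves $\La_{\chi_{f_\infty}}\omega-\theta_\xi(\chi_{f_\infty})=C_\lambda$ by the ODE characterization of $\psi_\lambda$. From the proof of Lemma \ref{thest}, $|\vartheta(\cdot,t)-\phi(\cdot,t)|_{C^0(X_n)}\leq C$ uniformly in $t$, so $u:=\vartheta-\phi$ is uniformly bounded and satisfies the linear parabolic equation
\[
\frac{\partial u}{\partial t}=F^{i\bar j}(\cdot,t)\, u_{i\bar j}+\xi(u)
\]
obtained by linearizing along the segment from $\chi_\phi$ to $\chi_\vartheta$, with $F^{i\bar j}$ positive-definite. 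By the parabolic maximum principle, $\operatorname{osc}_{X_n} u(\cdot,t)$ is non-increasing; a parabolic Harnack or energy argument forces it to decay to zero. Consequently $\ddbar u(\cdot,t_j)\to 0$ and $\chi_{\hat\vartheta_\infty}=\chi_{f_\infty}$. Since every subsequential limit gives the same metric, the full family $\chi_{\vartheta(t)}\to\chi_{f_\infty}$ smoothly on compact subsets of $X_n\smallsetminus E_0$, yielding $\La_{\chi_{\vartheta(t)}}\omega-\theta_\xi(\chi_{\vartheta(t)})\to C_\lambda$ smoothly on $X_n\smallsetminus E_0$.

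The main obstacle is establishing the decay to zero of $\operatorname{osc}_{X_n} u(\cdot,t)$: the linearized operator degenerates near $E_0$ as $t\to\infty$, since $\chi_\phi$ becomes singular along $E_0$, so a direct application of the strong maximum principle is problematic. The cleanest workaround is an energy argument: the modified $J$-functional associated with the limiting K\"ahler class $b[E_\infty]-\lambda[E_0]$, suitably renormalized, is bounded below (minimized by $\chi_{f_\infty}$), and its time derivative along the flow is essentially $-\|\partial_t\vartheta-(c-C_\lambda)\|^2_{L^2(\chi_\vartheta^n)}$. This gives $L^2$-in-time integrability which, combined with the higher-order estimates, upgrades to smooth pointwise convergence on compact subsets of $X_n\smallsetminus E_0$.
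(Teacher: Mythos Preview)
Your overall outline (Arzel\`a--Ascoli from Lemma~\ref{high.est}, then an energy argument to pin down the subsequential limit) is sound and is in spirit what the paper does, but there are two concrete issues.

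First, the comparison route via $u=\vartheta-\phi$ does not actually give what you need, and the problem is more than degeneracy near $E_0$. The maximum principle only tells you that $\operatorname{osc}_{X_n}u(\cdot,t)$ is non-increasing; it gives no decay. In fact $u$ need not tend to a constant at all: two solutions of the modified $J$-flow differing by a time-dependent constant (as produced by different normalizations) solve exactly the same linearized equation, so $\operatorname{osc}\,u$ can stabilize at a positive value. What you really need is $\ddbar u\to 0$, which is weaker and does not follow from your oscillation argument. So this first attempt should be abandoned rather than patched.

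Second, your proposed energy workaround uses the wrong functional. The modified $J$-functional for the \emph{limiting} class $b[E_\infty]-\lambda[E_0]$ is not defined along the flow, which lives in $b[E_\infty]-[E_0]$; and the $J$-functional for the original class is \emph{not} bounded below in the unstable case, so the time-integrability you want does not follow. The paper instead uses the direct $L^2$-energy
\[
E(t)=\int_{X_n}\bigl(\La_{\chi_{\vartheta(t)}}\omega-\theta_\xi(\chi_{\vartheta(t)})\bigr)^2\,\chi_{\vartheta(t)}^n,
\]
for which Li--Shi computed
\[
\frac{d}{dt}E(t)=-2\int_{X_n}\bigl|\nabla^{\chi_{\vartheta(t)}}\bigl(\La_{\chi_{\vartheta(t)}}\omega-\theta_\xi(\chi_{\vartheta(t)})\bigr)\bigr|_\omega^2\,\chi_{\vartheta(t)}^n\leq 0.
\]
This energy is manifestly bounded below by zero, so one gets $\int_0^\infty\!\!\int_{X_n}|\nabla(\La_\chi\omega-\theta_\xi)|^2<\infty$ without any stability hypothesis. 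Combined with your Arzel\`a--Ascoli step and the local higher-order bounds, this forces $\La_{\chi_{\vartheta(t)}}\omega-\theta_\xi(\chi_{\vartheta(t)})$ to converge smoothly on compact subsets of $X_n\smallsetminus E_0$ to a constant; the constant is identified as $C_\la$ by integrating against $\chi_\infty^{n-1}$ in the limiting class (or equivalently by the already-established convergence of the special solution). This is the argument you should write, and it avoids the comparison step entirely.
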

\begin{proof}
		Let us define a modified $J$-energy functional as $$ E(t)=\int_{X_n}\left(\La_{\chi_{\vartheta(t)}}\omega-\theta_\xi(\chi_{\vartheta(t)})\right)^2\chi_{\vartheta(t)}^n. $$
	In \cite{lishi2} Li-Shi proved that the energy decreases along the modified $J$-flow by showing that the energy functional along the flow satisfies the following equation: $$ \frac{d}{dt}E(t)=-2\int_{X_n}\left|\nabla^{\chi_{\vartheta(t)}}\left(\La_{\chi_{\vartheta(t)}}\omega-\theta_\xi(\chi_{\vartheta(t)})\right)\right|_\omega^2\chi_{\vartheta(t)}^n $$
	The proof follows from the similar arguments of the proof of the result in \cite{vrj}.
	Hence, the modified $J$-flow always converges to a unique limit and it satisfies the modified $J$-equation in the new K\"ahler class.
\end{proof}
\section{Proof of the main theorem}
\begin{proof}[\textbf{Proof of Theorem \ref{mt1}}]
\begin{case}
	It follows from the Case 1. of Theorem \ref{funflow}. that the solution of the ODE \eqref{ode} $ \tilde{\psi} $ satisfies 
	$ \tilde{\psi}'(\tau)>0, \text{ for all } \tau\in [1,b]. $	
	By Corollary \ref{iffcondition} that the condition $c_k+k>n-1$ is equivalent to $$(c+\theta_\xi(\chi))\chi^{n-1}-(n-1)\omega\wedge\chi^{n-2}>0.$$ Then by Theorem 3.3 of Li-Shi \cite{lishi2}, the modified $J$-flow converges smoothly to the unique solution of the modified $J$-equation irrespective of the initial value we choose.
	Thus $ \chi_t\to \chi_\infty $ as $ t\to \infty $ smoothly on $ X_n. $ 
	Therefore we get $$ n\frac{\omega\wedge \chi_\infty^{n-1}}{\chi_\infty^n}=c+\theta_\xi(\chi_\infty). $$
\end{case}
\begin{case}	
	And by the arguments similar to the proof of Lemma \ref{high.est} will give us the uniform higher order estimates on compact set $K\subset X_n\smallsetminus E_0$.	So the flow converges to the critical equation away from $ E_0. $ This implies that on $ X_n\smallsetminus E_0 $ we have $$ n\frac{\omega\wedge \chi_\infty^{n-1}}{\chi_\infty^n}=c+\theta_\xi(\chi_\infty). $$ 
	
	Since the convergence of $f(\cdot,t)$ to $f_\infty$ as $t\to \infty$ and it derivatives are uniform on compact subsets $K\subset X_n\smallsetminus E_0$, the function $f_\infty$ is smooth away from the exceptional divisor $E_0$ and it is continuous on $X_n$ and $ f'_\infty(s)=\psi^{-1}(g'(s)) $. This implies that $$ \chi_\infty=\ddbar f_\infty=\chi_0+\ddbar \phi_\infty, \text{ where } \phi_\infty=f_\infty-f_0\in L^\infty(X_n).$$ And $\phi_\infty $ is smooth away from the exceptional divisor.
	In this case we can extend the metric $ \chi_\infty $ to $ E_0. $ And Theorem \ref{unique} gives us that the limiting solution is unique.
\end{case}

\begin{case}
	In this case we have $$ \lim_{s \to -\infty}f'_\infty(s)=\la>1. $$
	This implies that $ \hat{\chi}_\infty=\ddbar f_\infty\in b[E_\infty]-\la[E_0]. $
	
	So $ \chi_t \to \hat{\chi}_\infty+(\la-1)[E_0] $  as $ t\to \infty. $
	And $ f'_\infty(s)=\psi^{-1}(g'(s)) $ where $ \psi $ is a solution of the ODE \eqref{ode} in the interval $ [\la,b] $ with $ \psi'(\la)=0. $
	By the argument similar to Case 2. we get the higher order estimates away form $ E_0. $ So the flow converges to the critical equation away from $ E_0. $ This implies that on $ X_n\smallsetminus E_0 $ we have $$ n\frac{\omega\wedge \hat{\chi}_\infty^{n-1}}{\hat{\chi}_\infty^n}=n\frac{ab^{n-1}-\la^{n-1}}{b^n-\la^n}+\theta_\xi(\hat{\chi}_\infty). $$ And by Proposition \ref{funflow} the $\la\in (1,b)$ is unique and satisfies the equation $$n\frac{ab^{n-1}-\la^{n-1}}{b^n-\la^n}-\frac{nk}{n+1}\frac{b^{n+1}-\la^{n+1}}{b^n-\la^n}+k\la=\frac{n-1}{\la}. $$ And, the uniqueness of limiting solution follows from Theorem \ref{unique}.
\end{case}
\end{proof}
\begin{rem}
	Suppose we are given with two K\"ahler classes $\chi\in b[E_\infty]-b_0[E_0]$, $\omega\in a[E_\infty]-a_0[E_0]$ and the vector field $\xi_k=kw\frac{\d}{\d w} (k\geq 0)$, then we consider the new K\"ahler classes $\frac{b}{b_0}[E_\infty]-[E_0]$ and $\frac{a}{a_0}[E_\infty]-[E_0]$ and let $f_1(s,t)$ be the solution of the modified $J$-flow for the scaled classes with $\ddbar g_1(s)\in \frac{a}{a_0}[E_\infty]-[E_0].$
	
	As the modified $J$-equation has the normalized Hamiltonian on its right side we have to scale the vector field by $\frac{b_0^2}{a_0}$ that is $\xi'=\frac{b_0^2}{a_0}\xi.$
	Then the usual parabolic scaling gives us the flow for the original K\"ahler classes. That is if we take $$ f(s,t)=b_0f_1(s,b_0^{-2}a_0t) \text{ and } g(s)=a_0g_1(s), $$ then this satisfies the modified $J$-flow on the K\"ahler classes $\chi$ and $\omega.$
\end{rem}

\subsection*{Acknowledgments:} The author would like to thank his PhD. advisor Ved Datar for suggesting this problem, for the helpful discussions, and the suggestions for improving the first draft of the current work. He would also like to thank Professor Vamsi P. Pingali for useful comments on the first draft. This research was supported by an NBHM fellowship and the graduate program of the Indian Institute of Science.

\end{document}